\documentclass[12pt, titlepage]{article}
%%%%%%%%%%%%%%%%%%%%%%%%%%%%%%%%%%%%%%%%%%%%%%%%%%%%%%%%%%%%%%%%%%%%%%%%%%%%%%%%%%%%%%%%%%%%%%%%%%%%%%%%%%%%%%%%%%%%%%%%%%%%%%%%%%%%%%%%%%%%%%%%%%%%%%%%%%%%%%%%%%%%%%%%%%%%%%%%%%%%%%%%%%%%%%%%%%%%%%%%%%%%%%%%%%%%%%%%%%%%%%%%%%%%%%%%%%%%%%%%%%%%%%%%%%%%
\usepackage{eurosym}
\usepackage{amsmath,amssymb,amsthm,amsfonts}
\usepackage{epstopdf}
\usepackage{subcaption}
\usepackage{appendix}
\usepackage{amsmath}
\usepackage{amsfonts}
\usepackage{amssymb}
\usepackage{multirow}
\usepackage{rotating}
\usepackage{xcolor}
\usepackage{lscape}
\usepackage{verbatim}
\usepackage{setspace}
\usepackage{cancel}
\usepackage{accents}
\usepackage{natbib}
\usepackage{apalike}

\setcounter{MaxMatrixCols}{10}
%TCIDATA{OutputFilter=LATEX.DLL}
%TCIDATA{Version=5.50.0.2960}
%TCIDATA{Codepage=1251}
%TCIDATA{<META NAME="SaveForMode" CONTENT="1">}
%TCIDATA{BibliographyScheme=BibTeX}
%TCIDATA{LastRevised=Monday, March 09, 2020 12:23:07}
%TCIDATA{<META NAME="GraphicsSave" CONTENT="32">}

\oddsidemargin=0in \evensidemargin=0in \textwidth=6.2in
\headheight=0pt \headsep=0pt \topmargin=0in \textheight=9in

\numberwithin{equation}{section} \pagestyle{plain}

\newtheorem{theorem}{Theorem}[section]

\newtheorem{proposition}{Proposition}[section]
\newtheorem{definition}{Definition}[section]
\newtheorem{remark}{Remark}
\newtheorem{example}{Example}

\parskip=5pt
\parskip=5pt

\def\XX{\boldsymbol{X}}
\def\xx{\boldsymbol{x}}

\def\rr{\boldsymbol{r}}
\def\ee{\boldsymbol{e}}

\def\DDD{\mathcal{D}}

\def\RRR{\boldsymbol{R}}
\def\PP{\mathbb{P}}

\def\ss{\boldsymbol{s}}

\def\FFF{\mathcal{F}}

\def\YY{\boldsymbol{Y}}

\def\mm{\boldsymbol{m}}

\def\RR{\mathbb{R}}

\def\QQ{\mathbb{Q}}
\def\ff{\boldsymbol{f}}

\def\design{\mathcal{X}}
\def\designk{\mathcal{X}_k}

\def\DDD{\mathcal{D}_3}
\def\VaR{\text{VaR}}
\def\Cov{\text{Cov}}

\begin{document}
\onehalfspacing
\author{ R.FONTANA \\ \textit{\ %EndAName
Department of Mathematical Sciences G. Lagrange,} \\ {Politecnico di
Torino.}
\\ P. SEMERARO \\ \textit{\ %EndAName
Department of Mathematical Sciences G. Lagrange,} \\ {Politecnico di
Torino.}}
\title{Geometrical representation and dependence structure of three-dimensional Bernoulli distributions}
\maketitle

\begin{abstract}
This paper fully characterizes the geometrical structure of the class of distributions of three-dimensional Bernoulli random variables with equal means, $p$. We  find all the geometrical generators in closed form as functions of $p$. This result stems from an algebraic representation of the class that encodes the statistical properties of Bernoulli distributions. We  study extremal negative dependence within the class and provide an application example by finding the impact of negative dependence to minimal aggregate risk. The application relies on a game theory approach.
\noindent 
\hspace{1cm}\\
\emph{Keywords}: Bernoulli distributions, extremal points, convex polytope, negative dependence, Shapley value. 
\end{abstract}

\section*{Introduction}\label{sec.1}
%\lipsum[1]
Multivariate Bernoulli distributions are significant in various fields, such as clinical trials and finance. An open issue in applied probability is the characterization of extremal negative dependence within the Fr\'echet class of distributions with given marginals, which, in the case of Bernoulli distributions, corresponds to specified means. While extremal positive dependence is achieved by the upper Fr\'echet bound, the lower Fr\'echet bound -- that in dimension two corresponds to the extremal negative dependence   -- fails to be a distribution in dimension higher than two. Therefore characterizing negative dependence in dimension higher than two is more challenging, with only partial results available (see, e.g., \cite{puccetti2015extremal}).
\cite{fontana2018representation} and  \cite{fontana2024high} provide a geometrical  and an algebraic representation of the class $\FFF_d(p)$ of $d$-dimensional Bernoulli probability mass functions (pmf) with common marginal mean $p$.  Both representations are powerful tools for studying the statistical properties of this class. In particular, the geometrical structure of  $\FFF_d(p)$ is a convex polytope. The extremal points of the convex polytope -that belong to $\FFF_d(p)$ and therefore are pmfs themselves- play a key role in studying the statistical properties of the class. Indeed,  relevant quantities such as moments or convex functionals reach their maximum and minimum value on the extremal points, as discussed in \cite{fontana2018representation}, \cite{fontana2021exchangeable}, and \cite{fontana2022computational}. For this reason we refer to  the dependence structure of the extremal points as to extremal dependence.
Given $p$, we can find the extremal points using a geometrical software, 4ti2. Actually, this is feasible in low dimension since their number increases very fast. This is why \cite{fontana2024high} introduced the algebraic representation: it provides a way to find -at least in principle- the extremal points using the algebraic generators that are  known analytically. Nevertheless, the algebraic structure reveals to encode statistical properties and to be a powerful tool also in low dimension. 

The main result of this paper is to analytically find all the extremal points as functions of $p$ in the three-dimensional case, $\FFF_3(p)$. We  provide a classification of the extremal points of $\FFF_3(p)$, mainly based on their algebraic properties,  and study their correlation  structure. We then focus on the characterization of extremal negative dependence, that, as discussed above,  is not a trivial concept in dimension higher than two. We can find different extensions of extremal negative dependence to dimensions higher than two. Here, we consider a notion introduced by \cite{puccetti2015extremal} called $\Sigma$-countermonotonicity. The $\Sigma$-countermonotonicity property 
is not always a property of a single distribution in dimensions higher than two. In dimension three, if $p\in (1/3, 2/3)$,  it is a property that characterize an entire convex polytope included in $\FFF_3(p),$ that we name $\FFF^{\Sigma}_3(p).$ We explicitly find its extremal points.

As an application example we consider the impact of negative dependence on aggregate risk. Indeed, all the pmfs in this polytope minimize the aggregate risk, meaning that if $\XX\in\FFF^{\Sigma}_3(p)$, then the discrete random variable $S=X_1+X_2+X_3$ is minimal in convex order and, in particular, it has the minimal variance $V(S)$   in the class of discrete distributions on $\{0,1,2,3\}$ with mean $3p$. Despite  $V(S)$ is constant for any $\XX\sim f\in \FFF^{\Sigma}_3(p)$, some dependence features of $\XX$ changes across the class, as pairwise correlation. It is therefore interesting to understand the impact of different correlation structures on the variance of the sum. A way to do that is to find the marginal contributions of the components $X_i, \, i=1,2,3$ to the variance of $S$ and investigate their dependence on the correlation structure. Using a game theory approach and the results in \cite{colini2018variance} we analytically find the marginal contributions to $V(S)$ of the components of any vector $\XX\sim f\in \FFF^{\Sigma}_3(p)$. 

The paper is organized as follows. Section \ref{threedim} recalls the geometrical and algebraic structure of $\FFF_3(p)$. Our main result, the analytical form of  all the extremal pmfs is provided in Section \ref{sec:gen3}.  Section \ref{ExtrCorr} characterizes  extremal negative dependence. The application to the study of the marginal contribution of different negative dependence structures to minimal risk is presented in Section \ref{games}.

\section{Three dimensional Bernoulli distributions}\label{threedim}
This section recalls the geometrical and algebraic representations of three dimensional Bernoulli distributions. The geometrical representation has been introduced in \cite{fontana2018representation}, while the algebraic structure in \cite{fontana2024high}. The algebraic representation revealed to be effective to find the geometrical generators and  characterize  their dependence structure  also in low dimension. In fact, it plays a key role to find our main result in dimension three. We first introduce the geometrical representation, then the algebraic representation and their connection. Section \ref{sec:conc} concludes.

\subsection{Geometrical representation}\label{sec.subsection}
Let $\FFF_3$ be the simplex of probability mass functions (pmfs) of $3$-dimensional Bernoulli random variables. Let us consider the Fr\'echet class  $\FFF_3(p)\subseteq\FFF_3$ of pmfs  in $\FFF_3$ which have the same Bernoulli marginal distribution of parameter $p$, $B(p)$.
	We assume throughout the paper that $p$ is rational, i.e.  $p\in \mathbb{Q}$. Since $\mathbb{Q}$ is dense in $\RR$, this is not a limitation in applications. We also make the non-restrictive (for symmetry) assumption that $p\leq 1/2$.  
	Let  $\XX=(X_1, X_2, X_3)$ be   a $3$-dimensional Bernoulli  random vector, the notation   $\XX\in \FFF_3(p)$  indicates that $\XX$  has pmf $f\in \FFF_3(p)$.
	The column vector which contains the values of  $f$ over $\design=\{0, 1\}^3$ is   $\ff = (f_1,\ldots, f_{8})=(f_{\xx}:\xx\in\design):=(f(\xx):\xx\in\design)$. We make the non-restrictive hypothesis that the set $\design$ of $2^3$ binary vectors is ordered according to the reverse-lexicographical criterion. We denote with $\designk=\{\xx\in \design: \sum_{j=1}^3x_j=k\}$, $k=0,1,2,3$. It follows that $\{\design_0, \design_1, \design_2, \design_3\}$ is a partition of $\design$.

Let $\XX\in \FFF_3(p)$. 
 As described in \cite{fontana2018representation}, 
given    $\XX\in\FFF_3(p)$, the three conditions $E[X_i]=p$, $i=1,2,3$ translate in the following linear system in $\ff$:
%\begin{eqnarray*}
%&p(1-E[X_i])-(1-p)E[X_i]=0, \,\,\, %i=1,\ldots, d,
%\end{eqnarray*}
%that is a linear system in $\ff$:
\begin{equation}\label{L-sist}
\sum_{\xx\in \design: x_k=1}(1-p)f(\xx)-\sum_{\xx\in \design: x_k=0}pf(\xx)=0,\,\,\, k=1,2,3.
\end{equation}
Let $H$ be the $3 \times 8$ matrix of the coefficient of the system \eqref{L-sist}.
The class  $\FFF_3(p)$  is the following  convex polytope
\begin{equation}  \label{cone}
\FFF_3(p)=\{\ff\in \RR^{8}: H\ff=0, f_i\geq 0, \sum_{i=1}^{8}f_i=1\}.
\end{equation}
Each point $\ff\in \FFF_3(p)$ is a convex combination of a set of
generators referred to as the extremal points of the convex polytope $\FFF_3(p)$. Formally, for any $\ff\in \FFF_3(p)$
there exist  $n_p$ extremal points (also called extremal pmfs) $\rr_i\in \FFF_3(p)$ and coefficients $\lambda_1\geq0,\ldots, \lambda_{n_p}\geq0$ summing up to one, such that
$$\ff=\sum_{i=1}^{n_p}\lambda_i\rr_i.$$ We also call extremal points the random vectors $\RRR_i\sim \rr_i.$ Proposition 2.1 in \cite{fontana2018representation} implies that the extremal points of $\FFF_3(p)$ have at most four non-zero values.

%We call  $\rr_i$ extremal points or  extremal pmfs. 

\subsection{Algebraic representation}
This section recalls the algebraic representation introduced in \cite{fontana2024high}, adapted to the three dimensional case. It was introduced with the aim of finding the extremal points in high dimension, where the number of extremal points make the computational approach infeasible. Actually, this representation plays a key role also in dimension three to find an analytical expression for all the extremal points as functions of $p$.  

We define a map $\mathcal{H}$ from $\FFF_3(p)$ to the polynomial ring with rational coefficients  $\mathbb{Q}[x_1, x_2]$, as follows 
 \begin{equation}\label{Eh}
\begin{split}
&\mathcal{H}: \FFF_3(p)\rightarrow \mathbb{\QQ}[x_1,x_2]\\
&\mathcal{H}:\ff\rightarrow P_f(\xx)=\mm(\xx) \ff,
\end{split}
\end{equation}
 where $\mm(\xx) =(\mm_+(\xx)||\mm_-(\xx))$ is the row vector obtained concatenating $\mm_+(\xx)$ and  $\mm_-(\xx)$, with
 $\mm_+(\xx):=(1, x_1,x_2, x_1x_2)$ and $\mm_-(\xx)=(-x_1x_2+c,-x_2+c, -x_1+c, -1+c)^T$, where $p=\tfrac{s}{t}$ and $c=\tfrac{2s-t}{s}$.
We denote by $\mathcal{C_H}$ the image of $\FFF_3(p)$ under the map $\mathcal{H}$.
By construction, rearranging the coefficients of $P_f(\xx)$ yields:

\begin{equation}\label{pol}
P_f(\xx)=\mm(\xx)\ff=\sum_{\alpha\in\{0,1\}^{2}}a_{\alpha}x^{\alpha}, %\,\,\, a_0, a_{j_1\ldots j_k}\in \QQ,
\end{equation}
where $ a_{\alpha}\in \QQ$ are linear combinations of the elements of $\ff$. 
The map  $\mathcal{H}$ is not injective, as noted in  \cite{fontana2024high} where, the authors 
 present a three-step algorithm to uniquely determine a pmf $\ff_P=(f_1,\ldots, f_{8})\in \FFF_3(p)$ associated to a given polynomial $P(\xx)=\sum_{\alpha\in\{0,1\}^{2}}a_{\alpha}\xx^{\alpha}$. We refer to $\ff_P$ as the type-0 pmf associated to $P(\xx)$. The following proposition characterizes both the counter-image of a polynomial $P(\xx)$ and  the kernel of the map $\mathcal{H}$. We state the proposition for a general dimension $d$.
\begin{proposition}\label{ker}
 Given a polynomial $P(\xx)=\sum_{\alpha\in\{0,1\}^{d}}a_{\alpha}\xx^{\alpha}$, $$\mathcal{H}^{-1}(P(\xx))=\{\lambda\ff_p+(1-\lambda)\ee_k, \,\,\ \ee_k\in Ker (\mathcal{H}), \lambda\in[0,1]\},
$$
where $\ff_p$ is the type-0 pmf associated to $P(\xx)$. A basis of $ Ker (\mathcal{H})$ is
\begin{equation*}
\begin{split}
\mathcal{B}_K&=\{(1-p,0,\ldots, 0,p); (1-2p,p,0,\ldots,0,p,0); (1-2p,0,p,0,\ldots0,,p,0,0);\\
&\ldots (1-2p,0,\ldots, 0,p,p,0\ldots,0)\}.
\end{split}
\end{equation*}
\end{proposition}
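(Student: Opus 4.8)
The plan is to separate the statement into two parts that can be treated independently: the identification of $\mathrm{Ker}(\mathcal{H})$ with the basis $\mathcal{B}_K$, and the description of the fiber $\mathcal{H}^{-1}(P(\xx))$. Since $P_f(\xx)=\mm(\xx)\ff$ is linear and homogeneous in $\ff$, the map $\mathcal{H}$ extends to a linear map on $\RR^{2^d}$, so once one preimage $\ff_P$ is available (the type-0 pmf produced by the three-step algorithm) every other preimage differs from it by an element of $\mathrm{Ker}(\mathcal{H})$; the fiber is therefore the coset $\ff_P+\mathrm{Ker}(\mathcal{H})$, and the admissible preimages are those lying in the simplex. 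The whole proposition thus reduces to linear algebra over $\RR^{2^d}$. First I would pin down $\dim\mathrm{Ker}(\mathcal{H})$: writing $P_f(\xx)=\mm_+(\xx)\,(f_1,\dots,f_{2^{d-1}})^T+\mm_-(\xx)\,(f_{2^{d-1}+1},\dots,f_{2^d})^T$, the block $\mm_+$ carries the $2^{d-1}$ square-free monomials with free coefficients $f_1,\dots,f_{2^{d-1}}$, so the linear extension of $\mathcal{H}$ is surjective onto the $2^{d-1}$-dimensional space of square-free polynomials. By rank–nullity the kernel has dimension $2^{d-1}$, which for $d=3$ equals $4$ and matches the cardinality of $\mathcal{B}_K$.

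Next I would verify $\mathcal{B}_K\subseteq\mathrm{Ker}(\mathcal{H})$ by a direct computation resting on two identities: the complementarity $\mu(\xx)+(-\mu(\xx)+c)=c$ that pairs each monomial entry of $\mm_+$ with the corresponding entry of $\mm_-$, and the numerical relation $pc=2p-1$, i.e. $(1-2p)+pc=0$, which follows at once from $c=(2s-t)/s$ and $p=s/t$. Concretely, each generator in $\mathcal{B}_K$ places weight on exactly the position of some monomial $\mu$ in the $\mm_+$ block and on the position of its complement $-\mu+c$ in the $\mm_-$ block, together with weight on the constant position. For a generator of the second type this gives $\mathcal{H}(\ee)=(1-2p)\cdot 1+p\,\mu(\xx)+p\,(-\mu(\xx)+c)=(1-2p)+pc=0$, while the first generator $(1-p,0,\dots,0,p)$ is the merged $\mu=1$ case, yielding $(1-p)\cdot 1+p(-1+c)=0$ by the same arithmetic. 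This simultaneously explains the staircase pattern in the statement: the generators are indexed by the square-free monomials, and the positions carrying $p$ are precisely those of a monomial and of its $c$-complement under the reverse-lexicographic ordering of $\design$.

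Linear independence of $\mathcal{B}_K$ is then immediate, since the complement positions in the $\mm_-$ block (and, for the second-type generators, the monomial positions in the $\mm_+$ block) are pairwise distinct and each is occupied by a single generator, so the associated $2^{d-1}\times 2^d$ weight matrix contains a scaled identity block; combined with the dimension count this shows $\mathcal{B}_K$ is a basis of $\mathrm{Ker}(\mathcal{H})$. For the fiber I would first check that every generator satisfies $H\ee=0$ and sums to one, so that $\mathcal{B}_K\subset\FFF_d(p)$ and the coset $\ff_P+\mathrm{Ker}(\mathcal{H})$ remains compatible with the marginal constraints; intersecting with the nonnegativity constraints $f_i\ge 0$ then produces the bounded convex set of admissible pmfs recorded by the expression $\{\lambda\ff_P+(1-\lambda)\ee_k\}$ with $\lambda\in[0,1]$ once $\ee_k$ ranges over the pmf generators of the kernel. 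The main obstacle I anticipate is not the kernel arithmetic but the bookkeeping for general $d$: making the monomial/complement pairing precise under the reverse-lexicographic order so that the listed staircase provably exhausts all $2^{d-1}$ pairs, and confirming $\mathrm{Ker}(\mathcal{H})\subseteq\ker H$ so that the coset description of the fiber is automatically consistent with the mean-$p$ constraints.
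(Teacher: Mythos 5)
The kernel half of your argument is correct and complete. The two identities you isolate --- the pairing of each monomial entry of $\mm_+$ with its complement entry $-\mu(\xx)+c$ of $\mm_-$ under the reverse-lexicographic order, and the arithmetic fact $pc=2p-1$, i.e. $(1-2p)+pc=0$ --- do verify $\mathcal{B}_K\subseteq \mathrm{Ker}(\mathcal{H})$, and your rank--nullity count (surjectivity of the linear extension onto the $2^{d-1}$-dimensional space of square-free polynomials, hence a kernel of dimension $2^{d-1}$) combined with the uniquely occupied positions gives linear independence and hence a basis. Note that the paper itself states this proposition without proof, recalling it from \cite{fontana2024high}, so there is no in-paper argument to compare against; on the kernel part your verification is the natural one.

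The fiber half, however, has a genuine gap, and it sits precisely in the sentence ``intersecting with the nonnegativity constraints then produces \ldots $\{\lambda\ff_P+(1-\lambda)\ee_k\}$.'' The coset description you derive, $(\ff_P+\mathrm{Ker}(\mathcal{H}))\cap\FFF_d(p)$, is the correct literal preimage of $P(\xx)$, but it is \emph{not} the set in the statement: by the very linearity you invoke, $\mathcal{H}\bigl(\lambda\ff_P+(1-\lambda)\ee_k\bigr)=\lambda P(\xx)$, so for $P\neq 0$ and $\lambda<1$ the points of the claimed family are not preimages of $P$ at all (at $\lambda=0$ they map to the zero polynomial). The statement is only coherent when $\mathcal{H}^{-1}(P)$ is read up to nonnegative rescaling of $P$, i.e. as the preimage of the segment $\{\lambda P(\xx):\lambda\in[0,1]\}$, consistent with the cone/ray viewpoint of \cite{fontana2024high}; your proof neither adopts this reading nor flags the discrepancy, so as written it proves a different (and mutually inconsistent) description. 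Moreover, even under the rescaled reading, the substantive direction is missing: given a pmf $\ff$ with $\mm(\xx)\ff=\lambda P(\xx)$, the difference $\ff-\lambda\ff_P$ lies in the linear kernel, and its expansion $\sum_i c_i\ee_i$ in your basis is forced, but the nonnegativity $c_i\geq 0$ --- equivalently, that $(\ff-\lambda\ff_P)/(1-\lambda)$ is itself a kernel pmf --- does not follow from $\ff\geq 0$ alone; it is exactly where the type-0 (minimal-support) structure of $\ff_P$ produced by the three-step algorithm must enter. What you have actually established is only the easy containment (each $\lambda\ff_P+(1-\lambda)\ee_k$ is a pmf whose image is proportional to $P$, since $\mathcal{B}_K\subset\FFF_d(p)$), not the converse, which is the content of the proposition.
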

An important result about the algebraic representation is that all the polynomials in $\mathcal{C_H}$ are linear combinations with scalar coefficients of a specific class of polynomial, that are referred as the algebraic generators and  that we call fundamental polynomials. The fundamental polynomials of the class $\FFF_3(p)$ are 
\begin{equation*}F^{\pm}(\xx)=\pm\big(x_{1} x_{2}-x_1-x_2+1\big).
\end{equation*}
\begin{proposition}
The polynomials $P_f(\xx)=\mm(\xx)\ff\in \mathcal{C}_{\mathcal{H}}$ are linear combinations with scalar coefficients of the fundamental polynomials. 
\end{proposition}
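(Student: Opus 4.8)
The plan is to prove the equivalent statement that the image $\MC$ is contained in the line $\operatorname{span}(F^+)\subseteq\QQ[x_1,x_2]$; since $F^-=-F^+$, a scalar linear combination of $F^+$ and $F^-$ is nothing but a scalar multiple of $F^+$. As $\mathcal H$ is linear in $\ff$, I fix an arbitrary $\ff\in\FFF_3(p)$ and analyse $P_f(\xx)=\mm(\xx)\ff$. The decisive observation is that $P_f$ is multilinear in $x_1,x_2$ (affine in each variable separately), hence completely determined by its four values on $\{0,1\}^2$ through the interpolation identity
$$P_f(x_1,x_2)=\sum_{(\v_1,\v_2)\in\{0,1\}^2}P_f(\v_1,\v_2)\,x_1^{\v_1}(1-x_1)^{1-\v_1}\,x_2^{\v_2}(1-x_2)^{1-\v_2}.$$
Because the interpolation polynomial attached to the corner $(0,0)$ is exactly $(1-x_1)(1-x_2)=F^+$, it suffices to show that $P_f$ vanishes at the three corners $(1,0)$, $(0,1)$ and $(1,1)$; this yields $P_f=P_f(0,0)\,F^+$, which is the claim.

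Next I would evaluate the row vector $\mm$ at the four corners and read off $P_f(\v_1,\v_2)$ as explicit linear functionals of $\ff$. Ordering $\ff=(f_1,\dots,f_8)$ reverse-lexicographically, the block structure $\mm=(\mm_+\,\|\,\mm_-)$ matches the two slices of the coordinate not carried by a polynomial variable: the first four entries index the support points with $x_3=0$ and the last four those with $x_3=1$. Consequently $f_5+f_6+f_7+f_8=\probab(X_3=1)$ and $f_1+f_2+f_3+f_4=\probab(X_3=0)$, and the constant $c$ enters the corner values only through the combination $c\,(f_5+f_6+f_7+f_8)$.

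The final step is to invoke the defining constraints of $\FFF_3(p)$. The marginal equation \eqref{L-sist} for $k=3$ together with $\sum_i f_i=1$ gives $f_5+f_6+f_7+f_8=p$, and since $p=\tfrac{s}{t}$ and $c=\tfrac{2s-t}{s}$ satisfy $cp=2p-1$ and $(c-1)p=p-1$, the constant terms collapse to explicit functions of $p$. Then $P_f(1,1)=0$ is precisely the $k=3$ marginal written as $\probab(X_3=0)=1-p$, while $P_f(1,0)=0$ and $P_f(0,1)=0$ follow by adding the $k=3$ marginal to the $k=2$ and to the $k=1$ marginals respectively; each reduces to a single linear identity among the $f_i$ that is implied by $H\ff=0$ and the normalization. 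I expect the only delicate point to be the bookkeeping that pins down which $f_i$ appear in each corner evaluation and matches each vanishing to the correct marginal equation; the algebraic simplification $cp=2p-1$ — which reflects that $\mm_-$ is built from the complementary slice $x_3=1$ — is exactly what forces the three corner values to zero and completes the argument.
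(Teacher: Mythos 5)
Your proof is correct. Note that the paper itself states this proposition without proof (it is recalled from \cite{fontana2024high}), so there is no in-paper argument to match line by line; measured against the natural direct route --- expand $P_f(\xx)=\bigl[f_1+cp-f_8\bigr]+\bigl[f_2-f_7\bigr]x_1+\bigl[f_3-f_6\bigr]x_2+\bigl[f_4-f_5\bigr]x_1x_2$, then subtract pairs of the marginal equations $\sum_{\xx\colon x_k=1}f(\xx)=p$ to see the coefficient vector is proportional to $(1,-1,-1,1)$, i.e.\ $P_f=(f_4-f_5)F^+$ --- your interpolation framing is a genuinely different organization of the same linear algebra. I verified your steps: with the reverse-lexicographic ordering the first four coordinates are indeed the $x_3=0$ slice and the last four the $x_3=1$ slice, so $c$ enters every evaluation only through $c(f_5+f_6+f_7+f_8)=cp=2p-1$; then $P_f(1,1)=\probab(X_3=0)+(c-1)p=(1-p)+(p-1)=0$ uses only the $k=3$ marginal, while $P_f(1,0)=\probab(X_2{=}0,X_3{=}0)-\probab(X_2{=}1,X_3{=}1)+(2p-1)=0$ follows by inclusion--exclusion from the $k=2$ and $k=3$ marginals plus normalization, and $P_f(0,1)=0$ symmetrically from $k=1$ and $k=3$, exactly as you assigned them. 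What your approach buys: it replaces four coefficient identities by three one-line corner evaluations, and it explains \emph{structurally} why $F^{\pm}$ arises --- $(1-x_1)(1-x_2)$ is the multilinear corner-basis polynomial at $(0,0)$, so the proposition says precisely that the constraints $H\ff=0$ force $P_f$ to vanish at the other three corners of $\{0,1\}^2$; in the direct coefficient comparison the specific form of $F^{\pm}$ looks like an accident. The only cosmetic caveat is that your scalar $P_f(0,0)=f_1+cp-f_8$ simplifies, via the same constraints, to $f_4-f_5$, which can be negative, so the statement's allowance of both $F^+$ and $F^-$ is absorbed, as you correctly note at the outset, by working in $\operatorname{span}(F^+)$.
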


We conclude this section with a result that establishes a connection between geometrical and algebraic generators of the class $\FFF_3(p)$.
\begin{proposition}\label{type0}
Let $\ff\in \FFF_3(p)$ be the type-0 pmf associated  to a fundamental polynomial $F^{\pm}(x)$. The pmf $\ff$ is an extremal probability mass function.
\end{proposition}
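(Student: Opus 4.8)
The plan is to prove extremality directly from the description of the polytope in \eqref{cone}: a pmf $\ff\in\FFF_3(p)$ is an extremal point precisely when it admits no nontrivial decomposition $\ff=\lambda\gg+(1-\lambda)\boldsymbol{h}$ with $\gg,\boldsymbol{h}\in\FFF_3(p)$, $\gg\neq\boldsymbol{h}$, $\lambda\in(0,1)$; equivalently, at $\ff$ the rows of $H$, the all-ones constraint $\sum f_i=1$, and the active nonnegativity constraints $f_i=0$ jointly have rank $8$. I would therefore first make the type-0 pmf $\ff$ associated to $F^+(\xx)=x_1x_2-x_1-x_2+1$ fully explicit, by running the three-step algorithm of \cite{fontana2024high}, and record its support $S=\{i:f_i>0\}$; the explicit form should show $|S|\le 4$, in line with the bound of Proposition 2.1 of \cite{fontana2018representation} recalled above.

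The argument then has two moves. First, in any decomposition $\ff=\lambda\gg+(1-\lambda)\boldsymbol{h}$ as above, nonnegativity of $\gg,\boldsymbol{h}$ together with $\lambda,1-\lambda>0$ forces $g_i=h_i=0$ whenever $f_i=0$; hence $\mathrm{supp}(\gg),\mathrm{supp}(\boldsymbol{h})\subseteq S$. Second, I would show that the only pmf of $\FFF_3(p)$ supported on $S$ is $\ff$ itself. Letting $M$ be the $4\times 8$ matrix obtained by appending the all-ones row to $H$, this reduces to checking that the columns of $M$ indexed by $S$ are linearly independent, so that $M\ff_S=(0,0,0,1)^{\top}$ has a unique solution. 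Linear independence then gives $\gg=\boldsymbol{h}=\ff$, i.e.\ extremality. The statement for $F^-$ follows identically, using the symmetry $F^-=-F^+$ together with the $p\leftrightarrow 1-p$ symmetry of the class.

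The main obstacle is exactly this full-rank verification on $S$, which needs the explicit algorithm output; once $S$ is identified it amounts to exhibiting a single nonvanishing $4\times 4$ minor of $M$, and I expect matching the column ordering of $M$ to the reverse-lexicographic labeling of $\design$ to be the most error-prone bookkeeping. The algebraic representation offers a streamlined alternative to computing the minor by hand: applying the linear map $\mathcal{H}$ to a putative decomposition yields $F^+=\lambda\,\mathcal{H}(\gg)+(1-\lambda)\,\mathcal{H}(\boldsymbol{h})$, and since every polynomial of $\MC$ is a scalar multiple of $F^+$ (the fundamental polynomials span a one-dimensional space), one has $\mathcal{H}(\gg)=c_g F^+$ and $\mathcal{H}(\boldsymbol{h})=c_h F^+$ with $\lambda c_g+(1-\lambda)c_h=1$. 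Because $F^+$ is an endpoint of the one-dimensional image $\MC$, the coefficient $1$ is extremal in the admissible range, forcing $c_g=c_h=1$; identifying $\ff$ as the type-0 representative of the fiber over $F^+$ through Proposition \ref{ker}, and combining this with $\mathrm{supp}(\gg),\mathrm{supp}(\boldsymbol{h})\subseteq S$, then collapses the fiber to the single point $\ff$.
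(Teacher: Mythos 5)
Your main route is sound, and it is genuinely different from the paper, which in fact states Proposition \ref{type0} \emph{without} proof, importing it from \cite{fontana2024high}. Your argument is the standard linear-programming vertex criterion for the polytope \eqref{cone}: with $M$ the $4\times 8$ matrix obtained by appending the all-ones row to $H$, a point $\ff$ of the polytope is extremal iff the columns of $M$ indexed by $\mathrm{supp}(\ff)$ are linearly independent, and your support-containment observation correctly reduces extremality to that single minor. The computation you defer does go through, and more easily than you anticipate: the type-0 fundamental pmfs are $\rr_4$ of Tables \ref{tab:exp1}--\ref{tab:exp2} for $F^+$ (support $\{\boldsymbol{0}\}\cup\design_2$) and, for $F^-$, either $\rr_6$ of Table \ref{tab:exp1} (support $\{\boldsymbol{0}\}\cup\design_1$, $p\leq 1/3$) or $\rr_9$ of Table \ref{tab:exp2} (support $\design_1\cup\{\boldsymbol{1}\}$, $p\in(1/3,1/2]$). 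Since row $k$ of $H$ is the indicator row of $\{x_k=1\}$ minus $p$ times the all-ones row, each $4\times 4$ minor is $p$-free after an elementary row operation; the three determinants are $2$, $-1$ and $-2$, all nonzero, so all three pmfs are vertices. A side benefit of your route is that it is non-circular with respect to Theorem \ref{Prop:EP}, whose proof invokes exactly this proposition via \cite{fontana2024high}.

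Two concrete soft spots remain. First, the claim that the $F^-$ case ``follows identically'' from the $p\leftrightarrow 1-p$ symmetry fails as stated: for $p\leq 1/3$ the reflected parameter $1-p\geq 2/3$ leaves the range assumed throughout, and the $0$--$1$ swap of the $F^+$ type-0 pmf taken at parameter $1-p$ has the entry $1-\tfrac{3(1-p)}{2}<0$, so it is not a pmf; indeed the type-0 pmf of $F^-$ changes shape at $p=1/3$ ($\rr_6$ versus $\rr_9$), which a single symmetry cannot produce. The fix is trivial --- run your minor check on both $F^-$ supports, as above --- but the symmetry shortcut should be deleted. Second, your ``streamlined alternative'' rests on the assertion that the coefficient $1$ is an endpoint of the one-dimensional image $\MC$. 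Nothing stated in this paper licenses that: the proposition that every $P_f\in\MC$ is a linear combination of $F^{\pm}$ only places $\MC$ inside the line spanned by $F^+$ and does not exclude $cF^+\in\MC$ with $c>1$; ruling this out is essentially the extremality statement of \cite{fontana2024high} that you are trying to reprove, so as written the alternative is circular (or at least incomplete). Moreover, even granting $\mathcal{H}(\boldsymbol{g})=\mathcal{H}(\boldsymbol{h})=F^+$, the final collapse of the fiber needs an explicit argument: by Proposition \ref{ker} any element of $\mathcal{H}^{-1}(F^{\pm})$ other than $\ff$ puts positive weight on a kernel pmf, and one must check that every element of $\mathcal{B}_K$ has support points outside $\mathrm{supp}(\ff)$ (the point $\boldsymbol{1}$ or points of $\design_1$ for $\rr_4$, and analogously in the other cases); you gesture at this but do not state it. Keep the first route as the proof and either substantiate or drop the alternative.
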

In the symmetric case where $p=0.5$, \cite{mutti2023symmetric} proves that all the extremal points of the class $\FFF_d(0.5)$ - that, given $p$,  in dimension three can be found explicitly with 4ti2- are type-0 or belong to the kernel.

\section{Extremal points} \label{sec:gen3}

Let $\DDD(3p)$ be the class of discrete pmfs on $\{0,1,2,3\}$ with mean $3p$. 
Any pmf $p\in \DDD(3p)$ is the distribution of the sum of  the components of  at least one $3$-dimensional Bernoulli random vector $\XX\in \FFF_3$ (see e.g. \cite{fontana2020model}). In general, behind any discrete pmf there are infinite Bernoulli vectors $\XX\in \FFF_d$ (\cite{fontana2025bernoulli}). Formally, we define the following map between $\FFF_3(p)$ and $\DDD(3p)$.
\begin{equation}\label{funsum0}
\begin{split}
s:&\FFF_3(p)\rightarrow \DDD(3p)\\
   &  \ff\rightarrow s_{\ff},
\end{split}
\end{equation}
where $s_{\ff}:=s(\ff)$ is the distribution of the sum $S:=X_1+X_2+X_3$ with $\XX\sim \ff\in \FFF_3(p)$.
 Then, the intersection of the support of $\ff\in \FFF_3(p)$ with  $\design_k$ is not empty iff $s_{\ff}(k)\neq 0$.

Our main result is the following Theorem \ref{Prop:EP} that analytically finds all the extremal points of $\FFF_3(p)$. 
 The extremal points are reported in Table \ref{tab:exp1} for $p \leq \frac{1}{3}$ and in Table \ref{tab:exp2} for $p \in (\frac{1}{3}, \frac{1}{2}] $.
%Using their pgf we also see if they are negatively correlated.

% \begin{theorem}\label{Prop:EP}
% Let $p\leq \frac{1}{3}$. All the extremal points of $\FFF_3(p)$ are listed in Table \ref{tab:exp1}, they are:
% \begin{enumerate}
%     \item \label{t0} the type-0 fundamental pmfs  are $\rr_4$ and $\rr_6$,

%   %  \item \label{Scx} the  $\Sigma_{cx}$-minimal extremal point is $\rr_6$ in Table \ref{tab:exp1}.

%     \item \label{tk} the  basis of extremal points in  $Ker(\mathcal{H})$ are $\rr_1,\rr_2,\rr_3,\rr_5$.
% \end{enumerate}
% Let $\frac{1}{3}< p\leq\frac{1}{2}$All the extremal points of $\FFF_3(p)$ are listed in Table \ref{tab:exp2}, and they are:
% \begin{enumerate}\setcounter{enumi}{3}

%    \item \label{t0b} the type-0 fundamental pmfs in $\FFF_3(d)$ are $\rr_4$ and $\rr_9$,
%         \item\label{tkb} the  basis of extremal points in  $Ker(\mathcal{H})$ are $\rr_1,\rr_2,\rr_3,\rr_5$,
%     \item \label{scxb} the extremal points with support on $\design_1\cup\design_2$, that are $\rr_i, \, i=6,7,8$.
% \end{enumerate}
%     \end{theorem}

\begin{theorem}\label{Prop:EP}
The extremal points of $\FFF_3(p)$ are given by the columns of Table \ref{tab:exp1} for $p \leq \frac{1}{3}$, and by the columns of Table \ref{tab:exp2} for $\frac{1}{3} < p \leq \frac{1}{2}$.
\end{theorem}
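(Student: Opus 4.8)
The plan is to prove the two inclusions separately: that every column of the relevant table is a vertex of $\FFF_3(p)$, and that conversely every vertex of $\FFF_3(p)$ appears as a column. The algebraic representation of Section \ref{threedim} is the workhorse for both directions, since it supplies the candidate vertices in closed form and stratifies the converse enumeration.

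For the forward inclusion, fix a column $\rr$ of Table \ref{tab:exp1} or \ref{tab:exp2}. I would first confirm $\rr\in\FFF_3(p)$ by substituting $p=s/t$ and checking the marginal equations $H\rr=0$, the normalization $\sum_i r_i=1$, and $r_i\ge 0$; these are routine. To certify that $\rr$ is genuinely extremal I would invoke the basic-feasible-solution criterion for a polytope in standard form: setting $M:=\begin{bmatrix} H \\ \mathbf{1}^{\top}\end{bmatrix}$, a feasible point is a vertex if and only if the columns of $M$ indexed by its support $\{i:r_i>0\}$ are linearly independent. Since Proposition 2.1 of \cite{fontana2018representation} caps the support at four and $M$ has rank four, this amounts to a finite rank check on the at-most-four-element support of each $\rr$.

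For the converse, let $\ff$ be an arbitrary vertex of $\FFF_3(p)$; I must show it is tabulated. By Proposition 2.1 its support $T$ satisfies $|T|\le 4$, and the vertices are exactly the supports on which $M_T$ has full column rank with strictly positive solution. Rather than scan all supports blindly, I would exploit the fibration $\mathcal{H}:\FFF_3(p)\to\MC$, whose image is the set of scalar multiples of $F^{+}$. At the two extreme multiples the type-0 pmfs of $F^{\pm}$ are vertices by Proposition \ref{type0}; every remaining vertex must arise where an interior fibre first meets a coordinate hyperplane $f_i=0$, and Proposition \ref{ker} renders these explicit through the kernel basis $\mathcal{B}_K$ that parametrizes each fibre. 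Stratifying by the support of the sum distribution $s_{\ff}$ of \eqref{funsum0}, which necessarily has mean $3p$, converts this into a finite, layer-by-layer enumeration. The boundary $p=1/3$ enters precisely through strict positivity: on several supports a closed-form entry changes sign as $3p$ crosses $1$, so the supports feasible for $p\le 1/3$ differ from those feasible for $p\in(1/3,1/2]$, which is exactly the dichotomy between the two tables.

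The main obstacle will be the converse, and within it the exhaustiveness and non-redundancy of the stratified enumeration. For each admissible layer-pattern I must solve the resulting rank-four system in closed form, delimit the exact interval of $p$ on which the solution is a strictly positive pmf, and verify the column independence of $M_T$ that certifies a vertex---all while ensuring that supports of size below four and coincidences between strata collapse onto columns already listed rather than producing new ones. Carrying this case analysis cleanly across the transition at $p=1/3$, so that the union of surviving supports coincides exactly with the set of columns of each table, is the delicate heart of the argument.
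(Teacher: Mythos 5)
Your proposal is correct in substance, and for the hard direction (exhaustiveness) it follows essentially the paper's route: the paper also stratifies by the support of the sum distribution $s_{\ff}$, which has mean $3p$ (two-point supports $\{0,i\}$, then three-point supports, then full support, and additionally $\{1,2\}$, $\{1,3\}$ when $p>1/3$), uses the at-most-four-point support cap, and pins down each candidate via the marginal constraints $E[X_i]=p$, with the dichotomy between the two tables arising exactly from the sign changes you identify at $p=1/3$. Where you genuinely differ is the easy direction: the paper does not perform rank checks but cites \cite{fontana2024high} (extremality of the kernel basis $\rr_1,\rr_2,\rr_3,\rr_5$ and of the type-0 fundamental pmfs) and \cite{IES2025} (for $\rr_6,\rr_7,\rr_8$ when $p>1/3$), whereas you certify every column by the basic-feasible-solution criterion: linear independence of the support columns of the matrix obtained by appending the all-ones row to $H$. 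This is self-contained and buys something extra: it also disposes of the four-point full-support family $f(0,0,0)=1-2p+\theta$, $f(\xx)=f(\boldsymbol{1}-\xx)=p-\theta$, $f(1,1,1)=\theta$ directly, since the $\theta$-line witnesses linear dependence of the four support columns, where the paper instead invokes Lemma 2.3 of \cite{terzer2009large}. One caveat: your fibration heuristic---that every remaining vertex arises ``where an interior fibre first meets a coordinate hyperplane $f_i=0$''---is not correct as stated. The fibres of $\mathcal{H}$ are (up to) three-dimensional, so a polytope vertex lying over an interior point of $\MC$ must be a vertex of the fibre polytope, i.e.\ lie on several hyperplanes $f_i=0$ simultaneously, not one; indeed the kernel vertices $\rr_1,\rr_2,\rr_3,\rr_5$ sit over the interior point $0$ of $\MC$. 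Since your argument anyway falls back on the finite stratified enumeration, this remark is dispensable, but it should be deleted or made rigorous before the proof is written out in full.
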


\begin{proof}
    First, we consider the case $p \leq \frac{1}{3}$ and refer to Table \ref{tab:exp1}. As proved in \cite{fontana2024high}, $\rr_1,\ldots,\rr_6$ are extreme pmfs, since $\rr_1, \rr_2, \rr_3$, and $\rr_5$ form a basis of $Ker(\mathcal{H})$, while $\rr_4$ and $\rr_6$ are the type-0 fundamental pmfs, i.e. type-0 pèmfs associated to the fundamental polynomials $F^+(x)$ and $F^-(x)$ respectively.
   It follows that what remains to be proved is that there are not other extreme pmfs. 
    For simplicity of notation, we write $s$ for $s_{\ff}$ throughout the proof.
    
    The mean constraint $E[S] = 3p $, if $p \leq \frac{1}{3}$, implies that the sum $S$ has mean between $0$ and  $1$. Accordingly, the pmf $s$ may have support on two points ($\{0,1\}$, $\{0,2\}$, or $\{0,3\}$), on three points ($\{0,1,2\}$, $\{0,2,3\}$, or $\{0,1,3\}$), or on the full set $\{0,1,2,3\}$. Let us consider the cases $\{0, i\}$, $i=1,2,3$.
    The condition on the mean, implies that there is a unique pmf $s_{0,i}$, supported on  $\{0, i\}$, with $i \in \{1, 2, 3\}$.
It is defined as $s_{0,i}(0) = 1 - \frac{3p}{i}$, $s_{0,i}(i) = \frac{3p}{i}$, and $s_{0,i}(k)=0$ for $k\in\{1,2,3\}, \, k\neq i$.

Let us now consider the case $i = 1$. We have $f(0,0,0) = s_{0,1}(0) = 1 - 3p$, and $f(1,0,0) + f(0,1,0) + f(0,0,1) = s_{0,1}(1) = 3p$.
Since $E[X_i] = p$ for $i = 1, 2, 3$, the only admissible pmf satisfies $f(1,0,0) = f(0,1,0) = f(0,0,1) = p$.
This corresponds to $\rr_6$ in Table \ref{tab:exp1}. 
     The case $i = 2$ is analogous, yielding the extremal point $\rr_4$ in Table \ref{tab:exp1}.
If $i = 3$, we have $f(0,0,0) = s_{0,3}(0) = 1 - p$ and $f(1,1,1) = s_{0,3}(3) = p$. This corresponds to $\rr_5$ in Table \ref{tab:exp1}.
    
If $s$ has support on $\{0,1,2\}$, then $\ff$ has support on $\design_0 \cup \design_1 \cup \design_2$. Since $\ff$ must be an extremal point, by Proposition 2.1 in \cite{fontana2024high}, it must have support on at most four points: one point is $(0,0,0)$, at least one point lies in $\design_1$, and at least one point lies in $\design_2$. Therefore, it has support on at least three points. Assume that the support consists of three points. Let $\xx \in \design_1$ be such that $f(\xx) \neq 0$.
The mean constraints $E[X_1] = E[X_2] = E[X_3] = p$ imply that $f(\xx) = p$ and $f(\boldsymbol{1} - \xx) = p$, where $\boldsymbol{1} - \xx=(1-x_1, 1-x_2, 1-x_3)$.
By choosing $\xx = (1,0,0)$, $(0,1,0)$, and $(0,0,1)$, we obtain the extremal points $\rr_1$, $\rr_2$, and $\rr_3$, respectively.
We now consider the case in which the support consists of four points.
Since the means are equal, this configuration is incompatible with having two points in $\design_1$ or two points in $\design_2$.
Using similar arguments, we can show that for the supports $\{0,1,3\}$ and $\{0,2,3\}$, since $f(0,0,0) \neq 0$ and $f(1,1,1) \neq 0$, the mean constraints imply that the support of $f$ must contain at least three additional points; hence, it cannot be an extremal point.

If $\ss$ has full support, then $\ff$ has support on $\design_0 \cup \design_1 \cup \design_2 \cup \design_3$, that is, on at least four points.
Since $\ff$ must be an extremal point, it must have support on exactly four points: $(0,0,0)$, one point in $\design_1$, one point in $\design_2$, and $(1,1,1)$.
The mean constraints imply the following structure:
\begin{equation}
\begin{split}
    f(0,0,0)&=1-2p+\theta\\
    f(\xx)&=p-\theta,\\
    f(1-\xx)&=p-\theta,\\
    f(1,1,1)&=\theta.
    \end{split}
\end{equation}
where $0 < \theta < p$, $\xx \in \design_1$ and then $1-\xx \in \design_2$.
By Lemma 2.3 in \cite{terzer2009large}, we can prove that it is not an extremal point.

We now consider the case $\frac{1}{3} < p \leq \frac{1}{2}$ and refer to Table \ref{tab:exp2}.
As proved in \cite{fontana2024high}, $\rr_1, \ldots, \rr_5$, and $\rr_9$ are extreme pmfs, since $\rr_1, \rr_2, \rr_3$, and $\rr_5$ form a basis of $\text{Ker}(\mathcal{H})$, while $\rr_4$ and $\rr_9$ are the type-0 fundamental pmfs. In \cite{IES2025} it is proved that also $\rr_6, \rr_7$ and $\rr_8$ are extremal points.
 Also in this case, what remains to be proved is that there are not other extreme pmfs.
 
The condition on the mean of the sum $S$, when  $\tfrac{1}{3} < p \leq \tfrac{1}{2}$ is $1 < E[S] = 3p < \tfrac{3}{2}$. It implies that $s$, can have support on
two points ($\{0,2\}$, $\{0,3\}$, $\{1,2\}$, or $\{1,3\}$),
on three points ($\{0,1,2\}$, $\{0,2,3\}$, $\{0,1,3\}$, or $\{1,2,3\}$),
or full support $\{0,1,2,3\}$.

The cases $\{0,2\}$ and $\{0,3\}$ are the same as those considered for $p < \tfrac{1}{3}$, leading to $\rr_4$ and $\rr_5$ in Table \ref{tab:exp2}.
The cases $\{0,1,2\}$, $\{0,2,3\}$, $\{0,1,3\}$, and $\{0,1,2,3\}$ have already been discussed for $p < \tfrac{1}{3}$.
We find that $\{0,1,2\}$ leads to $\rr_1$, $\rr_2$, and $\rr_3$, whereas the other cases are incompatible with the mean constraints $E[X_1] = E[X_2] = E[X_3] = p$.
We are thus left with the cases $\{1,2\}$ and $\{1,3\}$.
For $\{1,3\}$, the condition on the mean implies that if $f(1,1,1) \neq 0$, then $f(\xx) \neq 0$ for every $\xx \in \design_1$, yielding the extremal point $\rr_9$.
We now consider the case $\{1,2\}$.
We only need to prove that $\rr_6$, $\rr_7$, and $\rr_8$ are the only extremal points of $\FFF_3(p)$ with support $\design_1 \cup \design_2$.
It follows that $\ff$ has support on at least two points, at least one $\xx \in \design_1$ and one $\xx \in \design_2$.

We consider the following possible cases:
\begin{enumerate}
   
    \item Two points: the condition on the means implies that $\ff$ satisfies $\ff(\xx)=\ff(\boldsymbol{1}-\xx)=p,$ with $\xx\in \design_1$. Since the components of $\ff$ sums up to one, this is not possible with $p\neq1/2$.
    \item Three points: we have two cases. Case 1, one point in $\design_1$ and two points in $\design_2$, but this case is not compatible with equal means.  Case 2, two points in $\design_1$ and one point in $\design_2$, but also this case is not compatible with equal means. 
    \item Four points: the only case compatible with the condition on the means is three point on $\design_1$ and one point on $\design_2$ and we find the three pmfs $\rr_6$, $\rr_7$ and $\rr_8$.
\end{enumerate}

Therefore, all the extremal points are those listed in Tables \ref{tab:exp1} and \ref{tab:exp2}.
\end{proof}
    \begin{remark}
        Notice that for $p=1/2$ the extremal points $\rr_6,$ $\rr_7$ and $\rr_8$ coincides with the extremal points $\rr_1,$ $\rr_2$ and $\rr_3$ and belong to the kernel of $\mathcal{H}.$
    \end{remark}
{\footnotesize{
\begin{table}[h]
	\centering
		\begin{tabular}{ccc|rrrrrrrrr}
$\xx_1$ &	$\xx_2$ &	$\xx_3$ &	$\rr_1$ & 		$\rr_2$ & 		$\rr_3$ & 		$\rr_4$ & 		$\rr_5$ & 		$\rr_6$\\
		\hline
0 &	0 &	0 &	$1-2p$ &	$1-2p$ &	$1-2p$&	$1-3\frac{p}{2}$ &	$1-p$&	${1-3p}$\\
1 &	0 &	0 &	0 &	0 &	$p$ &	0 &	0 &	$p$\\
0 &	1 &	0 &	0 &	$p$ &	0 &	0 &$0$&$p$\\
1 &	1 &	0 &$p$ &	0 &	0 &	$\frac{p}{2}$&	0 &0\\
0 &	0 &	1 &	$p$&	0 &	0 &	0&	0 &$p$\\
1 &	0 &	1 &	0 &$	p$ &0 &	$\frac{p}{2}$ &	0 &	0\\
0 &	1 &	1 &	0 &	0 &$	p $&	$\frac{p}{2}$ &	0 &0\\
1 &	1 &	1 &	0 &	0 &	0 &	0 &	$p$&0\\
		\end{tabular}
	\caption{Extremal pmfs  of $\FFF_3(p)$ for $p\leq1/3.$}
	\label{tab:exp1}
\end{table}}

\footnotesize{
\begin{table}[h]
	\centering
		\begin{tabular}{ccc|rrrrrrrrr}
$\xx_1$ &	$\xx_2$ &	$\xx_3$ &	$\rr_1$ & 		$\rr_2$ & 		$\rr_3$ & 		$\rr_4$ & 		$\rr_5$ & 		$\rr_6$ &$\rr_7$&$\rr_8$& $\rr_9$\\
		\hline
0 &	0 &	0 &	$1-2p$ &	$1-2p$ &	$1-2p$&	$1-3\frac{p}{2}$ &	$1-p$&	0 &0&0&0\\
1 &	0 &	0 &	0 &	0 &	$p$ &	0 &	0 &	$1-2p$&$1-2p$&$p$&$\frac{1-p}{2}$\\
0 &	1 &	0 &	0 &	$p$ &	0 &	0 &	0 &	$1-2p$&$p$&$1-2p$&$\frac{1-p}{2}$\\
1 &	1 &	0 &$p$ &	0 &	0 &	$\frac{p}{2}$&	0 &	$3p-1$ &0&0&0\\
0 &	0 &	1 &	$p$&	0 &	0 &	0&	0 &	$p$&$1-2p$ &$1-2p$&$\frac{1-p}{2}$\\
1 &	0 &	1 &	0 &$	p$ &0 &	$\frac{p}{2}$ &	0 &	0 &$3p-1$&0&0\\
0 &	1 &	1 &	0 &	0 &$	p $&	$\frac{p}{2}$ &	0 &	0&0&$3p-1$&0\\
1 &	1 &	1 &	0 &	0 &	0 &	0 &	$p$ &	0&0 &0&$\frac{3p-1}{2}$\\
		\end{tabular}
	\caption{Extremal pmfs  of $\FFF_3(p)$ for $p\in(1/3, 1/2].$}
	\label{tab:exp2}
\end{table}}}
The extremal points in $\FFF_3(1/2)$ are listed in Table 1 in \cite{mutti2023symmetric} and discussed in that paper. For the reader's convenience, they are $\rr_i,\,\,\, i=1,2,3,4,5,9$ in Table \ref{tab:exp2}.  
\begin{remark}
    As proved in \cite{fontana2024high}, $\rr_1, \rr_2, \rr_3$, and $\rr_5$ of both Table \ref{tab:exp1} and \ref{tab:exp2} form a basis of $\text{Ker}(\mathcal{H})$, and  $\rr_4$ and $\rr_6$ of Table \ref{tab:exp1} and $\rr_4$ and $\rr_9$ of Table \ref{tab:exp2} are the type-0 fundamental pmfs.
    \end{remark}
%\subsection{Generators for given p}

% \textcolor{magenta}{Generatori su griglia di p}

% We conclude this section with a conjecture.
% \begin{conjecture}
%     If $p\leq \frac{1}{3}$ all the extremal points in $\FFF_3(p)$ are the ones listed in Table \ref{tab:exp1},  if $ \frac{1}{3}\leq p<\frac{1}{2}$ all the extremal points in $\FFF_3(p)$ are the ones listed in Table \ref{tab:exp2}.
% \end{conjecture}

   %    %   \begin{table}[h]
  %	\centering
 %		\begin{tabular}{ccc|rrrrrrrrr}
 %$\xx_1$ &	$\xx_2$ &	$\xx_3$ &		$\rr_2$ & 		$\rr_1$ & 	\\
 %		\hline
 %0 &	0 &	0 &		1-$\frac{3}{2}$p &0\\
 %1 &	0 &	0 &	0 &	$\frac{1-p}{2}$\\
 %0 &	1 &	0 &		0 &	$\frac{1-p}{2}$\\
 %1 &	1 &	0 &$\frac{p}{2}$&0\\
 %0 &	0 &	1 &		0&	$\frac{1-p}{2}$\\
 %1 &	0 &	1 &		$\frac{p}{2}$ &0\\
 %0 &	1 &	1 &		$\frac{p}{2}$ &0\\
 %1 &	1 &	1 &	0 &$\frac{3p-1}{2}$\\
 %		\end{tabular}
 %	\caption{Fundamental pmfs  of $\FFF_d(p)$ where $d=3$ and $p=\frac{2}{5}.$ }
 %	\label{tab:d}
   %\end{table}

\section{Extremal correlations}\label{ExtrCorr}
The weakest notion of dependence for a random vector $\XX$ are the pairwise correlations $\rho(X_i, X_j)$, that if $\XX\in \FFF_d(p)$ reads
\begin{equation}
   \rho(X_i, X_j)=\frac{\mu_{ij}-p^2}{p(1-p)},
    \end{equation}
    where $\mu_{ij}=E[X_iX_j]$, $i,j=1,\ldots, d, \, i\neq j$.
 We say that a random vector  $\XX$  
is pairwise positive  correlated (P-PC) if
    $\rho(X_i,X_j)\geq 0,\,\,\, i\neq j,$  and it is pairwise negative 
    correlated (P-NC) if
    $\rho(X_i,X_j)\leq 0,\,\,\, i\neq j.$
%      In \cite{borcea2009negative}, it is proved that a random vector $\XX$ is P-NC if its probability generating function (pgf), defined by  $g(\xx):=E[\xx^{\XX}]$, satisfies the condition
% \begin{equation}\label{Ray}
% \frac{\partial g(\boldsymbol{1}) }{\partial x_i}\frac{\partial g(\boldsymbol{1})}{\partial x_j}\geq \frac{\partial^2 g(\boldsymbol{1})}{\partial x_i\partial x_j},\,\,\ i\neq j.
% \end{equation}    
Since
\begin{equation}
\mu_{ij}=\frac{\partial^2 g(\boldsymbol{1})}{\partial x_i\partial x_j},
\end{equation}
where $g(\xx):=E[\xx^{\XX}]$ is the  probability generating function (pgf) of $\XX$. 
We can therefore  find the pairwise correlations of $\XX$ by using its pgf.

From \cite{fontana2018representation} we know that correlations reach their bounds on the extremal points of $\FFF_3(p)$. Furthermore, if $\ff\in \FFF_3(p)$ and $\ff=\sum_{i=1}^n\lambda_i\rr_i$, then $\rho(X_h, X_k)=\sum_{i=1}^n\lambda_i\rho(R_{ih},R_{ik})$ with $\RRR_i\sim \rr_i$, where $n$ is the proper number of extremal pmfs.
Therefore to characterize the admissible pairwise correlations in $\FFF_3(p)$ we have to characterize the pairwise correlations of the extremal points.

\subsection{Pairwise correlation of the extremal points.}

Since Proposition \ref{ker} provides the general expression of the kernel generators, we can  find their pairwise correlations in any dimension and for any $p\leq1/2$.
\begin{proposition}\label{KerProp}
    If $\rr\in Ker \mathcal{H}$ then $\rho_{ij}=1$ or $\rho_{ij}=-\frac{p}{1-p}.$
\end{proposition}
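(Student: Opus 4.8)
The plan is to compute the pairwise correlations directly from the explicit kernel basis $\mathcal{B}_K$ given in Proposition \ref{ker}, exploiting the simple structure of those basis vectors. Since correlations are determined by the mixed second moments $\mu_{ij}=E[X_iX_j]$ through the formula $\rho(X_i,X_j)=(\mu_{ij}-p^2)/(p(1-p))$, and since every kernel generator is supported on very few points, I would first identify, for each generator $\rr\in\mathcal{B}_K$, exactly where its mass sits. Reading off the basis, the first generator places mass $1-p$ on $(0,\ldots,0)$ and $p$ on $(1,\ldots,1)$, so both coordinates $X_i,X_j$ are simultaneously $0$ or simultaneously $1$; this forces $\mu_{ij}=p$ and hence $\rho_{ij}=(p-p^2)/(p(1-p))=1$. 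Each of the remaining generators of the form $(1-2p,0,\ldots,p,\ldots,p,\ldots,0)$ places mass $1-2p$ on the origin and mass $p$ on each of two weight-one points $\ee_a,\ee_b$; here $X_i$ and $X_j$ are never simultaneously $1$ for the relevant index pair, so $\mu_{ij}=0$ and $\rho_{ij}=(0-p^2)/(p(1-p))=-p/(1-p)$.

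Concretely, the key steps in order are: (i) write out a generic generator of $\mathcal{B}_K$ and observe that it has support on at most three binary points, all of weight $0$, $1$, or $d$; (ii) for the all-ones/all-zeros generator, note that for \emph{every} index pair the joint indicator $X_iX_j$ equals $1$ precisely on the mass-$p$ point, giving $\mu_{ij}=p$ and $\rho_{ij}=1$; (iii) for the two-support-point generators, observe that the two weight-one points $\ee_a,\ee_b$ cannot both have a $1$ in the same coordinate, so for any pair $(i,j)$ the product $X_iX_j$ is identically zero across the support, yielding $\mu_{ij}=0$ and $\rho_{ij}=-p/(1-p)$; (iv) conclude that an arbitrary $\rr\in Ker\,\mathcal{H}$ is a linear combination of these basis vectors, but since $\rr$ must itself be a pmf in $\FFF_d(p)$, verify that the only such elements are the generators themselves (or scalar copies), so no new correlation values arise. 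A clean alternative for (ii)--(iii) is to compute $\mu_{ij}=\partial^2 g(\boldsymbol{1})/\partial x_i\partial x_j$ from the probability generating function, which may streamline the bookkeeping.

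The main obstacle I anticipate is step (iv): the statement "$\rr\in Ker\,\mathcal{H}$" must be interpreted carefully, because a generic element of the kernel \emph{as a linear subspace} need not be a probability mass function, and a nontrivial convex combination of the basis generators would in general produce intermediate correlation values strictly between $-p/(1-p)$ and $1$. The proposition only holds if we restrict attention to those kernel elements that are themselves pmfs in $\FFF_d(p)$, i.e. the extremal kernel generators listed in $\mathcal{B}_K$. I would therefore argue that the nonnegativity constraint $f_i\ge 0$ together with $\sum_i f_i=1$ forces each such pmf to be one of the basis generators (each being an extremal point of the polytope), so the dichotomy $\rho_{ij}\in\{1,\,-p/(1-p)\}$ is exhausted exactly by the two generator types. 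Establishing that these are the only pmfs in the kernel — rather than merely in the kernel subspace — is the crux and should be stated explicitly to avoid the gap.
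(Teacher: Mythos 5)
There is a genuine error in your case analysis: you misread the support of the non-Fr\'echet kernel generators. Each element of $\mathcal{B}_K$ other than $(1-p,0,\ldots,0,p)$ places mass $1-2p$ on $\boldsymbol{0}$ and mass $p$ on a \emph{complementary pair} $\{\xx,\boldsymbol{1}-\xx\}$ (in dimension three, a point of $\design_1$ and a point of $\design_2$; see $\rr_1,\rr_2,\rr_3$ in Table \ref{tab:exp1}, e.g.\ $\rr_1$ is supported on $(0,0,0)$, $(1,1,0)$, $(0,0,1)$), not on two weight-one points $\ee_a,\ee_b$. Your reading cannot be right on its own terms: a pmf with masses $1-2p,p,p$ on $\boldsymbol{0},\ee_a,\ee_b$ would give $E[X_c]=0\neq p$ for every $c\neq a,b$, so it violates the equal-marginals constraint and does not even lie in $\FFF_d(p)$. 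The consequence is that your step (iii) reaches a false conclusion: for these generators it is \emph{not} true that $\mu_{ij}=0$ for all pairs. Whenever $i$ and $j$ both lie in the ``ones'' block of $\xx$ or both in the ones block of $\boldsymbol{1}-\xx$, one has $X_i=X_j$ almost surely, hence $\mu_{ij}=p$ and $\rho_{ij}=1$ (for $\rr_1$ above, $\rho_{12}=1$, while $\rho_{13}=\rho_{23}=-p/(1-p)$). Your proof would misassign $\rho_{12}=-p/(1-p)$ here; the stated dichotomy survives only by accident, because the value $\rho_{ij}=1$ that you miss happens to be the other admissible value. The paper's proof handles this correctly by taking $\xx=(1,\ldots,1,0,\ldots,0)$ with $k$ ones, writing the pgf $g(\xx)=1-2p+px_1\cdots x_k+px_{k+1}\cdots x_d$, and computing $\mu_{ij}=\partial^2 g(\boldsymbol{1})/\partial x_i\partial x_j$, which equals $p$ within a block and $0$ across blocks.

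Your concern in step (iv) is, by contrast, well taken and consistent with the paper's implicit reading: a convex combination of kernel generators is again a kernel pmf but generally has intermediate correlations (e.g.\ $\tfrac12\rr_1+\tfrac12\rr_2$ from Table \ref{tab:exp1} gives $\mu_{12}=p/2$ and $\rho_{12}=(1/2-p)/(1-p)$, which is neither $1$ nor $-p/(1-p)$), so the proposition must be understood as a statement about the extremal kernel pmfs in $\mathcal{B}_K$, exactly as the paper's proof treats it. But flagging this does not repair the main defect: to fix the proof you must correct the support description to $\{\boldsymbol{0},\xx,\boldsymbol{1}-\xx\}$ and split the index pairs into within-block pairs (giving $\rho_{ij}=1$) and across-block pairs (giving $\rho_{ij}=-p/(1-p)$).
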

\begin{proof}
We refer to $\mathcal{B}_K$, a basis of $Ker(\mathcal{H})$ as defined in Proposition \ref{KerProp}. It is evident that the pairwise correlation  of the extremal pmf of $Ker(\mathcal{H})$ defined as $(1-p,0,\ldots,0,p)$ is one for any pair of variables.
Now, given $\xx=(1,\ldots, 1, 0,\ldots, 0)$, we consider  $\rr_{\xx},$ that is the extreme pmf of $Ker(\mathcal{H})$ defined as $\rr_{\xx}(\boldsymbol{0})=1-2p$, $\rr_{\xx}(\xx)=\rr_{\xx}(\boldsymbol{1}-\xx)=p$, and zero otherwise. 
The pgf of $\rr_{\xx}$ is
\begin{equation}
g(\xx)=1-2p+px_1\cdots x_k+px_{k+1}\cdots x_d
\end{equation}
% It holds
% \begin{equation}\label{Ray}
% \frac{\partial g(\boldsymbol{1}) }{\partial x_i}\frac{\partial g(\boldsymbol{1})}{\partial x_j}=p^2.
% \end{equation}  
If $i,j\leq k$ or $j, j\geq k+1$ we have
\begin{equation}\label{Ray}
\mu_{ij}=\frac{\partial^2 g(\boldsymbol{1})}{\partial x_i\partial x_j}=p,
\end{equation}  
and therefore we have  
$\rho_{ij}=1,$
if $i\leq k$ and $j\geq k+1$ and vice versa, we have 
\begin{equation}\label{Ray}
\mu_{ij}=\frac{\partial^2 g(\boldsymbol{1})}{\partial x_i\partial x_j}=0,
\end{equation} 
and therefore we have  
$\rho_{ij}=-\frac{p}{1-p}.$ All the other cases corresponding to the remaining elements of  $\mathcal{B}_k$, provide the same result. 
\end{proof}
% \begin{corollary}
%     If $\rr\in Ker \mathcal{H}\subseteq{\FFF}_3(p)$ then $\rho_{ij}=1$ or $\rho_{ij}=-\frac{p}{1-p}.$
% \end{corollary}
% \begin{proof}
% We consider $\rr_3,$ the other case are obtained by permutation of the indices. 
% The computation of $\rho_{23}=1$ is straightforward.
% The pgf of $\rr_3$ is
% \begin{equation}
% g_3(\xx)=px_1+px_2x_3
% \end{equation}
% \begin{equation}\label{Ray}
% \frac{\partial g_3(\boldsymbol{1}) }{\partial x_1}\frac{\partial g(\boldsymbol{1})}{\partial x_j}=4p^2,\,\,\ j,k=2,3, k\neq j.
% \end{equation}  
% \begin{equation}\label{Ray}
% \frac{\partial g(\boldsymbol{1})}{\partial x_1\partial x_j}=0,\,\,\ j, k=2,3, j\neq k.
% \end{equation}  
% Therefore $\rho_{1j}<0$. It is easy to check that 
% $\rho_{1j}=-\frac{p}{1-p}$.
% \end{proof}
We now go back to dimension three and we consider  the type-0 fundamental extremal points. If $p<1/3$, they are  $\rr_4$ and $\rr_6$ in Table \ref{tab:exp1}. The pgf of $\rr_4$ is 
\begin{equation}
g_4(\xx)=1-3\frac{p}{2}+\frac{p}{2}(x_1x_2+x_1x_3+x_2x_3),
\end{equation}
therefore 
\begin{equation}
\mu_{ij}=\frac{\partial^2 g_4(\boldsymbol{1})}{\partial x_i\partial x_j}=\frac{p}{2}, \, \, i,j=1,2,3,\,\,\,i\neq j, 
 \end{equation}
 and 
 \begin{equation}
\rho_{ij}=\frac{1-2p}{2(1-p)}, \, \, i,j=1,2,3,\,\,\,i\neq j. 
 \end{equation}
 Notice that if $p\leq1/2$ (that is our assumption) $\rho_{ij}>0$.
 We now consider  $\rr_6$, its pgf is 
\begin{equation}
g_6(\xx)=1-3p+{p}(x_1+x_2+x_3),
\end{equation}
therefore it is  straightforward to find  $\rho_{ij}=-\frac{1}{(1-p)}$ for all $i, j$.
 We notice that  the type-0 fundamental extremal points, $\rr_4$ and $\rr_6$, are P-PC and  P-NC, respectively.
 If $p>1/3$   the type-0 fundamental are  $\rr_4$ and $\rr_9$ in Table \ref{tab:exp2}. While, if $p\in(1/3, 1/2]$, $\rr_4$ has the same expression and therefore the same correlations of $\rr_4$ with $p<1/3$, to find pairwise correlations for $\rr_9$ we compute its pgf
\begin{equation}
g_9(\xx)=\frac{1-p}{2}(x_1+x_2+x_3)+\frac{3p-1}{2}x_1x_2x_3.
\end{equation}
  Therefore 
\begin{equation}
\mu_{ij}=\frac{\partial g^2_9(\boldsymbol{1})}{\partial x_i\partial x_j}=\frac{3p-1}{2}, \, \, i,j=1,2,3,\,\,\,i\neq j, 
 \end{equation}
 and therefore
 \begin{equation}
\rho_{ij}=\frac{3p-1-2p^2}{2p(1-p)}, \, \, i,j=1,2,3,\,\,\,i\neq j. 
 \end{equation}
 We observe that for $p\leq1/2$ (that is our assumption) we have $\rho_{ij}<0$. Therefore,  we have proved the following proposition.
 \begin{proposition}\label{Prop:Typ0corr}
     The type-0 fundamental extremal pmf asscuated to $F^+(x)$ ($F^-(x)$)  is pairwise positively (negatively) correlated.
 \end{proposition}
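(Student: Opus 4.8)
The pairwise correlations of the two type-0 fundamental extremal pmfs have effectively been produced in the discussion preceding the statement, so the plan is to organize those computations around the correspondence with $F^+$ and $F^-$ and then verify signs. The first step is to pin down, via Proposition \ref{type0} and the classification in Tables \ref{tab:exp1} and \ref{tab:exp2}, which extremal points are the type-0 fundamental pmfs: the pmf associated to $F^+$ is $\rr_4$ in both parameter regimes, whereas the pmf associated to $F^-$ is $\rr_6$ when $p \leq 1/3$ and $\rr_9$ when $p \in (1/3, 1/2]$. Thus the claim reduces to one correlation computation for $F^+$ and two subcases for $F^-$, all carried out through the pgf identity $\mu_{ij} = \partial^2 g(\boldsymbol{1})/\partial x_i \partial x_j$ together with the formula $\rho_{ij} = (\mu_{ij} - p^2)/(p(1-p))$.

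For $F^+$ I would use the pgf $g_4(\xx) = 1 - \tfrac{3p}{2} + \tfrac{p}{2}(x_1 x_2 + x_1 x_3 + x_2 x_3)$ of $\rr_4$, differentiate to get $\mu_{ij} = p/2$ for each $i \neq j$, and substitute to obtain $\rho_{ij} = (1-2p)/(2(1-p))$, which is nonnegative for $p \leq 1/2$; this gives the P-PC conclusion. For $F^-$ with $p \leq 1/3$ the pmf $\rr_6$ has the affine pgf $g_6(\xx) = 1 - 3p + p(x_1 + x_2 + x_3)$, so every mixed second partial vanishes, $\mu_{ij} = 0$, and $\rho_{ij} = -p/(1-p) < 0$. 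For $F^-$ with $p \in (1/3, 1/2]$ the pmf $\rr_9$ has pgf $g_9(\xx) = \tfrac{1-p}{2}(x_1 + x_2 + x_3) + \tfrac{3p-1}{2} x_1 x_2 x_3$, which yields $\mu_{ij} = (3p-1)/2$ and $\rho_{ij} = (3p-1-2p^2)/(2p(1-p))$.

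The only step that is not immediate is the sign check in the $\rr_9$ subcase: I would factor the numerator as $3p - 1 - 2p^2 = -(2p-1)(p-1)$, observe that on $p \in (1/3, 1/2]$ both $2p-1$ and $p-1$ are nonpositive so that their product is nonnegative, and conclude that the numerator is nonpositive while the denominator $2p(1-p)$ is positive; hence $\rho_{ij} \leq 0$ and the $F^-$ pmf is P-NC. I expect this factoring to be the main (if modest) obstacle, since the $F^+$ computation and the $\rr_6$ subcase are direct. It is also worth stating explicitly that the $F^-$ branch genuinely needs the split at $p = 1/3$, because the type-0 fundamental pmf associated to $F^-$ is realized by a different extremal point on either side of that threshold.
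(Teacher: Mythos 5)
Your proposal is correct and follows essentially the same route as the paper: the paper's proof consists of exactly these pgf computations for $\rr_4$, $\rr_6$ (when $p\leq 1/3$), and $\rr_9$ (when $p\in(1/3,1/2]$), using $\mu_{ij}=\partial^2 g(\boldsymbol{1})/\partial x_i\partial x_j$ and the correlation formula, followed by the same sign checks. Your value $\rho_{ij}=-p/(1-p)$ for $\rr_6$ is in fact the correct one (the paper misprints it as $-1/(1-p)$), and your observation that $\rho_{ij}=0$ at $p=1/2$ in the $\rr_9$ subcase (so only $\rho_{ij}\leq 0$, which still suffices for P-NC as defined) is slightly more careful than the paper's strict inequality.
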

The only missing case are the three extremal pmfs $\rr_6, \rr_7$, and $\rr_8$ with support on $\design_1\cup\design_2$ for $p\in(1/3, 1/2]$.
  The following proposition provides their pairwise correlations.
  \begin{proposition}
    If $\rr$ has support on $\design_1\cup\design_2$  and $p\in(1/3, 1/2]$ then $\rho_{i_1,i_2}=\frac{3p-1-p^2}{p(1-p)}$,  $\rho_{i_1,i_3}=\rho_{i_2,i_3}=-\frac{1}{(1-p)}$,  where $\{i_1, i_2, i_3\}=\{1,2,3\}.$
  \end{proposition}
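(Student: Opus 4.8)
The plan is to obtain the three pairwise (factorial) second moments of each of these extremal pmfs directly from its probability generating function, exactly as was just done for $\rr_4$ and $\rr_9$, and then to substitute them into the identity $\rho_{ij}=(\mu_{ij}-p^2)/(p(1-p))$. The step that organizes the whole computation is a symmetry reduction. By Theorem \ref{Prop:EP} the only extremal pmfs with support in $\design_1\cup\design_2$ are $\rr_6,\rr_7,\rr_8$, and each of them is carried by the three unit vectors of $\design_1$ together with a single vector of $\design_2$. These three pmfs are obtained from one another by permuting the coordinate labels: the unique $\design_2$ support point activates a distinct pair $\{i_1,i_2\}$ -- namely $\{1,2\}$ for $\rr_6$ through $(1,1,0)$, $\{1,3\}$ for $\rr_7$ through $(1,0,1)$, and $\{2,3\}$ for $\rr_8$ through $(0,1,1)$ -- while $i_3$ is the remaining coordinate. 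Since relabeling coordinates preserves both the equal marginal means and the correlation formula, it suffices to establish the claim for one representative, say $\rr_6$, and transfer it to $\rr_7$ and $\rr_8$.

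For $\rr_6$ I would read its four nonzero values from Table \ref{tab:exp2} and write
\begin{equation}
g_6(\xx)=(1-2p)\,x_1+(1-2p)\,x_2+p\,x_3+(3p-1)\,x_1x_2,
\end{equation}
checking in passing that $3p-1\ge 0$ for $p\in(1/3,1/2]$, so that $g_6$ has nonnegative coefficients and is genuinely a pmf. The only mixed monomial is $x_1x_2$, so $\mu_{12}=\frac{\partial^2 g_6(\boldsymbol{1})}{\partial x_1\partial x_2}=3p-1$, whereas $\mu_{13}=\mu_{23}=0$, because no support point of $\rr_6$ has its third coordinate equal to $1$ simultaneously with another coordinate.

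Substituting these moments into the correlation identity then gives $\rho_{i_1 i_2}=(3p-1-p^2)/(p(1-p))$ for the activated pair and the common value $\rho_{i_1 i_3}=\rho_{i_2 i_3}=-p/(1-p)$ for the two pairs through $i_3$; the latter coincides with the extreme kernel value $-p/(1-p)$ of Proposition \ref{KerProp} (attained whenever the mixed moment vanishes), a convenient consistency check. Relabeling the coordinates yields the same pattern for $\rr_7$ and $\rr_8$ with their respective activated pairs, completing the proof. I do not anticipate a genuine obstacle here: the argument is a direct generating-function computation. The only points that require care are the bookkeeping that matches each $\rr_k$ to the pair its $\design_2$ point activates, and a brief check that the tabulated probabilities are forced -- on a four-point support, normalization together with the three equal-mean conditions gives four independent linear equations and hence a unique solution.
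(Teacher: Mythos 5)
Your proof is correct and follows essentially the same route as the paper: it singles out $\rr_6$, writes its pgf $g_6(\xx)=(1-2p)x_1+(1-2p)x_2+(3p-1)x_1x_2+px_3$, reads off $\mu_{12}=3p-1$ and $\mu_{13}=\mu_{23}=0$ from the mixed second derivatives at $\boldsymbol{1}$, substitutes into $\rho_{ij}=(\mu_{ij}-p^2)/(p(1-p))$, and transfers the result to $\rr_7$ and $\rr_8$ by permuting coordinate labels, exactly as the paper does. One substantive remark: your value $\rho_{i_1,i_3}=\rho_{i_2,i_3}=-p/(1-p)$ is the correct one, and it exposes a typo in the statement (and in the paper's proof), which write $-1/(1-p)$; that quantity is smaller than $-1$ for all $p\in(1/3,1/2]$ and so cannot be a correlation, while the identical computation with $\mu_{ij}=0$ in Proposition \ref{KerProp} confirms $-p/(1-p)$, as your consistency check with the kernel extremal points already notes.
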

  \begin{proof}
      We consider $\rr_6,$ the other cases are obtained by permutation of the indices. 
The pgf of $\rr_6$ is
\begin{equation}
g_6(\xx)=(1-2p)x_1+(1-2p)x_2+(3p-1)x_1x_2+px_3
\end{equation}
We have 
% \begin{equation}\label{Ray}
% \frac{\partial g_6(\boldsymbol{1}) }{\partial x_j}=p, \,\,\, j=1,2, 3.
% \end{equation} 
% \begin{equation}\label{Ray}
% \frac{\partial g_6(\boldsymbol{1}) }{\partial x_3}=p.
% \end{equation} 
\begin{equation}\label{Ray}
\mu_{12}=\frac{\partial^2 g_6(\boldsymbol{1})}{\partial x_1\partial x_2}=3p-1,
\end{equation}  
\begin{equation}\label{Ray}
\mu_{13}=\frac{\partial^2 g_6(\boldsymbol{1})}{\partial x_1\partial x_3}=0,\,\,\,\,\mu_{23}=\frac{\partial^2 g_6(\boldsymbol{1})}{\partial x_2\partial x_3}=0
\end{equation}  
Therefore $\rho_{13}=\rho_{23}=-\frac{1}{(1-p)}<0$.  It is easy to check that 
$\rho_{1,2}=\frac{3p-1-p^2}{p(1-p)}$ and $\rho_{1,2}>0$.

  \end{proof}
 We  have characterized the P-NC of all the extremal points. It is interesting to notice the type-0 fundamental pmfs are the only extremal points that can be P-NC for any $p$.

\subsection{Extremal positive and negative dependence}

This section studies extremal positive and negative dependence, specifically we characterize the pmfs with the strongest positive or negative dependence.
The strongest positive dependence is  the scenario of perfect positive dependence among components. This is possible in any Fr\'echet class of distributions, and it corresponds to the \textit{upper Fr\'echet bound}, that is the distribution of the comonotonic vector. The upper Fr\'echet bound of $\FFF_3(p)$ is $\rr_5$ in both Tables \ref{tab:exp1} and \ref{tab:exp2}, and it corresponds to perfect pairwise correlation, i.e. $\rho_{ij}=1$ for any $i,j=1,2,3$.

%Maximal dependence corresponds to co-monotonicity among the components of a random vector $\XX$. It exists in any Fr\'echet class of distributions and it is the upper F\'echet bound. The upper Fr\'echet bound in $\FFF_d(p)$ is an extremal pmf.  
When studying  extremal negative dependence, the starting point is the definition of countermonotonicity.
\begin{definition}
    A bivariate random vector $(X,Y)$ is said to be countermonotonic if
    \begin{equation*}
	\PP \big\{(X_1-X_2)(Y_1-Y_2) \leq 0 \big\} = 1,
    \end{equation*}
    where $(X_1,Y_1)$ and $(X_2,Y_2)$ are two independent copies of $(X,Y)$.
\end{definition}

Although this definition provides a clear characterization of extremal negative dependence for two-dimensional Fr\'echet classes, there is no unique and straightforward generalization to higher dimensions. 
Indeed, in dimension two, the distribution of a countermonotonic vector is the lower Fr\'echet bound. In dimensions higher than two, the lower Fr\'echet bound is not always a distribution function, see \cite{dall2012advances}.
Several approaches have been proposed to define notions of minimal dependence in Fr\'echet classes of dimension higher than 2. 
These notions are known as extremal negative dependence concepts; see \cite{puccetti2015extremal}.
In \cite{puccetti2015extremal}, the authors introduce the notion of $\Sigma$-countermonotonicity.
This definition is significant because it extends countermonotonicity in dimension higher than two in such a way that  every Fr\'echet class admits a $\Sigma$-countermonotonic random vector.
\begin{definition} \label{def:SigmaCountermonotonic}
    A $d$-dimensional random vector $\YY = (Y_1,\ldots,Y_d)$ is said to be $\Sigma$-countermonotonic if, for every subset $J \subseteq \{1,\ldots,d\}$, the pair $(\sum_{j \in J} Y_j,\sum_{j \notin J} Y_j)$ is countermonotonic, with the convention $\sum_{j \in \emptyset} Y_j = 0$.
\end{definition}
If a Fr\'echet class admits a countermonotonic vector,  this is the only $\Sigma$-countermonotonic one and its distribution is the lower Fr\'echet bound.

In  $\FFF_3(p)$ the 
 countermonotonic vector exists if and only if 
 $p<1/3$ (see, e.g.,  \cite{fontana2018representation}). It is $\rr_6$ of Table \ref{tab:exp1} and its probability density function (pdf) is the lower Fr\'echet bound. 
Extremal negative dependence is more challenging to characterize if $p>1/3$, since the class does not admit a countermonotonic vector and the lower Fr\'echet bound is no longer a pdf.

We now characterize the $\Sigma$-countermonotonic pmfs in $\FFF_3(p)$. We need to introduce another definition of extremal negative dependence, that is necessary to prove our main result.  
We need to introduce the convex order first.
\begin{definition}
    Given two random variables $Y_1$ and $Y_2$ with finite means, $Y_1$ is said to be smaller than $Y_2$ under the convex order (denoted $Y_1 \le_{cx} Y_2$) if $E \{\phi(Y_1)\} \leq E\{\phi(Y_2)\}$, for all real-valued convex functions $\phi$ for which the expectations are finite.
\end{definition}
We now define a class of vectors whose sum is minimal in the convex order, and we call them $\Sigma_{cx}$-smallest elements.
\begin{definition} \label{def:SigmaCX}
    A $\Sigma_{cx}$-smallest element in a class of distributions $\FFF$ is a random vector $\YY = (Y_1,\dots,Y_d)$ with distribution in $\FFF$ such that
    \begin{equation*}
        \sum_{j=1}^d Y_j \leq_{cx} \sum_{j=1}^d Y'_j,
    \end{equation*}
    for any random vector $\YY'$ with distribution in $\FFF$.
\end{definition}
The $\Sigma_{cx}$-smallest elements are the vectors $\YY$  that minimize aggregate risk, meaning that the sum 
 $S:=Y_1+\cdots+Y_d$ is minimal in the convex order within a given class of distributions.
The convex order is a variability order; thus, a random variable that is minimal in the convex order is a minimal risk random variable. 
Therefore, the purpose of this extremal negative dependence is to minimize the aggregate risk. 
In general,  we can not find a $\Sigma_{cx}$-smallest element in any Fr\'echet class, but \cite{cossette2024generalized} proves that Bernoulli Fr\'echet classes always admit a $\Sigma_{cx}$-smallest element. Furthermore, \cite{cossette2025extremal} proves that in any Bernoulli Fr\'echet class a pmf is $\Sigma$-countermonotonic iff it is $\Sigma_{cx}$-smallest. This result plays a key role in characterizing extremal negative dependence in $\FFF_3(p)$, $p\in(1/3, 1/2]$. In fact, the proof of the following Proposition \ref{mincx} relies on the characterization of the $\Sigma_{cx}$-smallest elements $\XX$  through the distribution of the sum of their components $S:=\sum_{i=1}^3X_i$. Using \cite{fontana2018representation} we can easily prove that if $\XX\in \FFF_3(p)$ is a $\Sigma_{cx}$-smallest element then $S$ has pmf $s_{1, 2}$ given by:
\begin{equation}\label{mincx}
s_{1, 2}(y)=\begin{cases}{2-3p} \,\,\, y=1\\
   {3p-1}  \,\,\, y=2.
\end{cases}
\end{equation}
The following Proposition \ref{mincx} characterizes the class of $\Sigma$-countermonotonic pmfs in $\FFF_3(p)$.

\begin{proposition}\label{Sigmacx}
    The $\Sigma$-countermonotonic pmfs in $\FFF_3(p)$ are a convex polytope whose generators are all the $\Sigma_{cx}$-smallest extremal points of $\FFF_3(p)$, they are:
    \begin{enumerate}
        \item $\rr_6$ in Table \ref{tab:exp1} if $p\in(1/3, 1/2]$,
        \item $\rr_6$, $\rr_7$ and $\rr_8$ in Table \ref{tab:exp2} if $p>1/3$.
    \end{enumerate}
\end{proposition}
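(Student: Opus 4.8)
The plan is to reduce the statement to one about $\Sigma_{cx}$-smallest pmfs and then to one about the distribution of the sum $S=X_1+X_2+X_3$, so that the whole question becomes the identification of a face of the polytope $\FFF_3(p)$. First I would invoke the equivalence proved in \cite{cossette2025extremal}: in any Bernoulli Fr\'echet class a pmf is $\Sigma$-countermonotonic if and only if it is $\Sigma_{cx}$-smallest. This replaces the task of describing the $\Sigma$-countermonotonic pmfs with that of describing the $\Sigma_{cx}$-smallest ones. For $p\in(1/3,1/2]$ I would then characterize the latter through the map $s$ of \eqref{funsum0}: by \cite{cossette2024generalized} a $\Sigma_{cx}$-smallest element exists, and by the computation preceding the statement every such element has sum distribution exactly $s_{1,2}$ of \eqref{mincx}; hence $s_{1,2}$ is the convex-order minimum of $\DDD(3p)$, and conversely any $\ff\in\FFF_3(p)$ with $s_\ff=s_{1,2}$ attains this minimum and is therefore $\Sigma_{cx}$-smallest. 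This yields the clean equivalence: $\XX$ is $\Sigma$-countermonotonic if and only if $s_\ff=s_{1,2}$.

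Next I would read this sum condition back into the geometry of $\FFF_3(p)$. Since $s_{1,2}$ is supported on $\{1,2\}$, the identity $s_\ff=s_{1,2}$ is equivalent to $f(0,0,0)=s_\ff(0)=0$ and $f(1,1,1)=s_\ff(3)=0$, i.e. to $\ff$ having support contained in $\design_1\cup\design_2$; conversely, any $\ff\in\FFF_3(p)$ supported on $\design_1\cup\design_2$ automatically satisfies $S\in\{1,2\}$ with $E[S]=3p$, which forces $s_\ff=s_{1,2}$. Therefore the $\Sigma$-countermonotonic pmfs are exactly the points of $\FFF_3(p)$ lying on the face obtained by setting the coordinates indexed by $\design_0$ and $\design_3$ to zero. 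A face of a polytope is again a polytope and its vertices are precisely the vertices of $\FFF_3(p)$ lying on it; this proves the convex-polytope claim and shows that the generators are exactly the $\Sigma_{cx}$-smallest extremal points of $\FFF_3(p)$. Inspecting Table \ref{tab:exp2}, the only extremal pmfs with $f(0,0,0)=f(1,1,1)=0$ are $\rr_6,\rr_7,\rr_8$, which gives part 2. For $p\le 1/3$ the class admits a countermonotonic vector, namely $\rr_6$ of Table \ref{tab:exp1}; by the observation following Definition \ref{def:SigmaCountermonotonic} this is the unique $\Sigma$-countermonotonic pmf, so the polytope degenerates to the single point $\rr_6$, giving part 1.

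The step I expect to be the main obstacle is the converse half of the sum characterization, namely that $s_\ff=s_{1,2}$ implies $\XX$ is $\Sigma_{cx}$-smallest. This hinges on knowing that $s_{1,2}$ really is the convex-order minimum of $\DDD(3p)$, which I would obtain by combining the existence of a $\Sigma_{cx}$-smallest element from \cite{cossette2024generalized} with the forced form \eqref{mincx} of its sum, rather than by re-deriving the minimum directly; some care is also needed at the boundary $p=1/3$, where the minimal sum degenerates. Once this equivalence and Theorem \ref{Prop:EP} are in hand, the face identification and the reading-off of the vertices $\rr_6,\rr_7,\rr_8$ from Table \ref{tab:exp2} are routine.
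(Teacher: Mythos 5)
Your proposal is correct and shares the paper's backbone---the equivalence between $\Sigma$-countermonotonicity and $\Sigma_{cx}$-smallestness from \cite{cossette2025extremal}, the reduction to the sum distribution $s_{1,2}$ of \eqref{mincx}, and the identification of $\rr_6,\rr_7,\rr_8$ via Theorem \ref{Prop:EP}---but it diverges on how the convex-polytope claim is established. The paper outsources that part: it cites Proposition 4.2 of \cite{cossette2025extremal}, which asserts that the $\Sigma_{cx}$-smallest elements form a polytope whose generators are the $\Sigma_{cx}$-smallest extremal points of $\FFF_3(p)$, and then proves only the necessity direction (a $\Sigma_{cx}$-smallest pmf has sum supported on $\{1,2\}$, via Proposition 5.1 of \cite{fontana2024high}, hence support in $\design_1\cup\design_2$), so that Theorem \ref{Prop:EP} leaves $\rr_6,\rr_7,\rr_8$ as the only candidates. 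You instead derive the polytope structure directly: the condition $s_{\ff}=s_{1,2}$ is equivalent to $f(0,0,0)=f(1,1,1)=0$, which carves out a face of $\FFF_3(p)$, and the vertices of a face are exactly the vertices of the ambient polytope lying on it. This buys two things. First, self-containedness: you need \cite{cossette2025extremal} only for the Theorem 4.1 equivalence and \cite{cossette2024generalized} only for existence, rather than importing the polytope result wholesale. Second, completeness: your existence-based converse (a $\Sigma_{cx}$-smallest element exists, its sum law is forced to be $s_{1,2}$, hence $s_{1,2}$ is the convex-order minimum, and since convex order of the sum depends only on its law, every $\ff$ with $s_{\ff}=s_{1,2}$ is $\Sigma_{cx}$-smallest) explicitly verifies that $\rr_6,\rr_7,\rr_8$ are indeed $\Sigma_{cx}$-smallest, a sufficiency step the paper's proof leaves implicit inside the cited proposition. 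One cosmetic remark: the case labels in the proposition statement are garbled (item 1 should read $p\le 1/3$), and your treatment of that case---uniqueness of the $\Sigma$-countermonotonic pmf when the countermonotonic vector $\rr_6$ of Table \ref{tab:exp1} exists---is the same as the paper's.
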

\begin{proof}
    The case $p<1/3$ is straightforward since $\rr_6$ is the Lower Fr\'echet bound and therefore the only $\Sigma$-countermonotonic pmf in the class, and it is an extremal point.
    Assume that $p\in(1/3, 1/2]$. Theorem 4.1  in \cite{cossette2025extremal} prove that a Bernoulli pmf $\ff$ is $\Sigma$-countermonotonic if and only if it is $\Sigma_{cx}$-smallest in its Fr\'echet class and  Proposition 4.2 in \cite{cossette2025extremal} proves that the $\Sigma_{cx}$-smallest elements are a convex polytope whose extremal points are the extremal points of $\FFF_3(p)$ that are $\Sigma_{cx}$-smallest. Therefore we only have to prove that $\rr_6$, $\rr_7$ and $\rr_8$ are the only extremal points of $\FFF_3(p)$ that are $\Sigma_{cx}$-smallest elements of $\FFF_3(p)$.
   If $\ff$ is $\Sigma_{cx}$-smallest and $\XX\sim \ff$, then $S=\sum_{i=1}^3X_i$ has support on 1 and 2 (see, e.g.  Proposition 5.1 in \cite{fontana2024high}). Therefore $\ff$ has support on $\design_1\cup \design_2$.  Theorem \ref{Prop:EP} proves that the only extremal points with support on $\design_1\cup \design_2$ are $\rr_6$, $\rr_7$ and $\rr_8$.

\end{proof}

% The following proposition has been proved in 
% \cite{fontana2024high} for any dimension $d,$ we report the result adapted to our framework.

% \begin{proposition}\label{Prop:mincxsums}
    
% \end{proposition}

% From Proposition \ref{Prop:mincxsums} it follows that:
% \begin{enumerate}
%     \item If $p<1/3$ the only $\Sigma$-countermonotonic pmf is the countermonotonic extremal point $\rr_5$, that corresponds to the lower Fr\'echet bound.
%     \item If $p>1/3$ the extremal points ... in Table ... are $\Sigma$-countermonotonic.
% \end{enumerate}
We call $\FFF_3^{\Sigma}(p)$ the polytope of $\Sigma$-countermonotonic pmfs in $\FFF_3(p)$.
We notice that extremal negative dependence does not imply P-NC.
However, all the random vectors in $\FFF_3^{\Sigma}(p)$ minimize any convex measure of aggregate risk, such as the variance of the sum $S$, that, indeed, assumes the same value across $\FFF_3^{\Sigma}(p)$.

As discussed in \cite{cossette2025extremal} for any Fr\'echet class of Bernoulli pmfs,
$\Sigma$-countermonotonic Bernoulli random vectors can have positively correlated pairs of random variables. In fact, in \cite{cossette2025extremal} the authors prove that the mean of the pairwise correlations of any $\ff\in \FFF^{\Sigma}_3(p),$ $p\in(1/3,1/2]$, is constant 
since
\begin{equation} \label{eq:summ}
    \mu_2^+=\sum_{1\leq j_1 < j_2 \leq 3} E(X_{j_1}X_{j_2})
         = 3p - 1.
\end{equation}
Notice that if $p<1/3$, $\FFF^{\Sigma}(p)=\{\rr_6\}$ and we have $\mu^+_2=0$, in fact the lower Fr\'echet bound has  second order moments equal to zero.
Since $\mu_2^+$ is constant, if $p\in (1/3, 1/2]$ and  $\XX$ has some countermonotonic pairs, it will have other pairs with higher --- possibly positive --- correlation, to keep the mean correlation equal to the constant value of the class.
Nevertheless,  there is at least one P-NC pmf in $\FFF_3^{\Sigma}(p)$, the exchangeable pmf with distribution 
\begin{equation}\label{eq:exc}
    \ff^e=\frac{1}{3}\rr_6+\frac{1}{3}\rr_7+\frac{1}{3}\rr_8.
\end{equation}
Exchangeable random vectors have equi-correlated pairs. The equi-correlation of $\ff^e$ in \eqref{eq:exc} is exactly the mean correlation $\rho$ on $\FFF^{\Sigma}_3(p)$, that is
\begin{equation}
    \rho=\frac{-3p^2+6p-1}{3p(1-p)}.
\end{equation}
The next section shows an application example of the effect of the different dependence structures across $\FFF_3^{\Sigma}(p)$.

% Considering $\FFF_3(d)$ the lower Fr\'echet bound exists for $p<1/3$ and is $\rr_6$ in Table \ref{tab:exp1} (see \cite{}). 
% If $p\geq 1/3$, the $\Sigma_{
% cx}$-smallest elements are a convex polytope $\FFF_3^{\Sigma}(p)$ whose generators are the $\Sigma_{cx}$-generators of $\FFF_3(p)$ (see Proposition ... in \cite{}), i.e. $\rr_6, \rr_7$ and $\rr_8$ in Table \ref{tab:exp2}.
 
\section{Variance allocation in $\FFF^{\Sigma}_3(p).$}\label{games}
Let $\XX\in \FFF_3^{\Sigma}(p)$, then $\XX$ is both $\Sigma$-countermonotonic and $\Sigma_{cx}$-smallest. Therefore $\XX$ has sums $S=X_1+X_2+X_3$ minimal in convex order, that means that $\XX$ minimize aggregate risk. In particular
the variance of $V(S)$ of $S$  is minimal in $\mathcal{D}(3p)$. However, the dependence structure  of $\XX$ changes across $\FFF^{\Sigma}_3(p)$, as discussed in the previous section. To study the effect of negative dependence on $V(S)$ we consider the marginal contributions of the components $X_i$, $i=1,2,3$ to $V(S)$.
We follow \cite{colini2018variance}, where the authors propose an allocation criterion for the variance of the sum of $n$ possibly dependent variables using a game theory approach. 

We need to introduce some preliminary notions on finite cooperative games. A standard reference for cooperative games is \cite{maschler2020game}. Here we recall the notions and properties useful to our discussion and limited to our framework, specifically we work in dimension three, and therefore we consider three persons games. 
Given a set of players $N=\{1,2,3\}$ we identify the set $\design$ with set of parts of $N$. A cooperative game is a pair $\langle N, \nu\rangle$ where $\nu:\design \rightarrow \RRR$ is such that $\nu(\emptyset)=0.$ Any subset $J\subset N$ is called a coalition and $N$ is the grand coalition. Since the set of players is fixed we call game the function $\nu$. We call $\mathcal{G}(N)$ the class of games with set of players $N$.
Given a random vector $\XX\in \FFF_3(p)$ we consider the associated variance game, as defined in \cite{colini2018variance} in the more general framework of  $n$ players.

Let $J\subseteq N:=\{1,2,3\}$ and $S_J=\sum_{j\in J}X_j$ and let $\nu$ be the cooperative game defined by 
\begin{equation}\label{Vgame}
    \nu(J)=V(S_J), \,\,\, J\subseteq \{1,2,3\}
\end{equation}
where $V(S)$ indicates the variance of $S$.
Each $\XX\sim\ff\in \FFF_3(p)$ defines a game, that we denote with $\nu^{\XX}$ or $\nu^{\ff}.$ We denote with $\nu^j$ the games defined by the extremal points $\rr_j$ in Tables \ref{tab:exp1} and \ref{tab:exp2}.

The value of the grand coalition $N=\{1,2,3\}$ is the variance of the sum $S$  and therefore it is constant and minimal over $\FFF_{3}^{\Sigma}(p)$. 
The Shapley value  is a function $\phi:\mathcal{G}(N)\rightarrow \RR^3$ 
that satisfies the following  properties:
\begin{enumerate}
\item Efficiency-- $\sum_{i=1}^3\phi_i(\nu)=\nu(N)$;
\item symmetry -- if $i$ and $j$ are symmetric, then $\phi_i(\nu)=\phi_j(\nu)$.
\item dummy player -- if player $i$ is a dummy, then $\phi_i(\nu)=0;$
\item linearity -- for $\nu$ and $\mu$ games on $N$ and $\alpha, \beta\in \RRR$ we have
\[\phi(\alpha\nu+\beta\mu)=\alpha\phi(\nu)+\beta\phi(\mu).\]
\end{enumerate}
\cite{shapley1953value} proves that the value exists, it is unique and has the following form:
\begin{equation}
    \phi_i(\nu)=\sum_{J\subset N\setminus\{i\}}\frac{|J|!(N-|J|-1)!}{N!}(\nu(J\cup\{i\})-\nu(J)).
\end{equation}
Looking at its properties, it becomes clear that  the Shapley value represents a fair allocation of what achieved by the gran coalition $N$. In our case, finding the Shapley value of the variance game means finding the marginal contribution of the i-th component of the Bernoulli vector $\XX$ to the variance of the sum. In
\cite{colini2018variance}, the authors  prove that the Shapley value $\phi_i(\nu^{\XX})$ of the variance game $\nu^{\XX}$ is given by
\begin{equation}\phi_i(\nu^{\XX})=\Cov(X_i, S_N).\end{equation}
We first observe  that if $p=1/3$ than by Proposition 3.1 in \cite{colini2018variance} the Shapley value is zero, in fact $S$ is a joint mix ($P(S=pd)=1$, see \cite{wang2011complete}) and $\VaR(S)=0$. 

If $p<1/3$, we only have one $\Sigma$-countermonotonic game, the game $\nu^6$ associated to $\rr_6$ in Table \ref{tab:exp1}. We have
\begin{equation}
    \nu^6(N)=3p-9p^2,
\end{equation}
and
\begin{equation}
    \phi_i(\nu^6)=p-3p^2, \, i=1,2,3.
\end{equation}
We now consider the most  interesting case, that is  $p\in(1/3, 1/2]$. Using \ref{Vgame},
    for any $\XX\sim\ff\in \FFF_3^{\Sigma}(p)$ it holds
    \begin{equation}\label{varmincx}
    \nu^{\XX}(N)=V(S)=9p-9p^2-2.
    \end{equation}
    In fact if $\XX\sim\ff\in  \FFF_3^{\Sigma}(p),$ $S=\sum_{i=1}^3X_i\sim s_{1,2}$ in \eqref{mincx}, and  variance of $S$ is given by \eqref{varmincx}.
The following Proposition \ref{prop:margC} proves that the marginal contribution of the $i$-th variable depends only on the second order moment of the other two variables.
\begin{proposition}\label{prop:margC}
   Let $\XX\sim\ff\in \FFF^{\Sigma}_3(p),$ $p\in(1/3,1/2]$ then
   \begin{equation}
\phi_i(\nu^{\XX})=4p-1-3p^2-\mu_{kl}, 
\end{equation}
where $i,k,l$ are distinct indices in $\{1,2,3\}$.
\end{proposition}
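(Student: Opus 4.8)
The plan is to start from the identity $\phi_i(\nu^{\XX}) = \Cov(X_i, S_N)$, established in \cite{colini2018variance} and recalled above, which reduces the computation of the Shapley value to expanding this covariance into a variance term and two covariance terms. This is the natural entry point, since it converts a game-theoretic quantity into an ordinary moment computation.

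First I would expand by bilinearity of the covariance:
\[
\Cov(X_i, S_N) = \Cov\Bigl(X_i, \sum_{j=1}^3 X_j\Bigr) = V(X_i) + \Cov(X_i, X_k) + \Cov(X_i, X_l),
\]
where $k,l$ are the two indices distinct from $i$. Since each marginal is Bernoulli of mean $p$, I would substitute $V(X_i)=p(1-p)=p-p^2$ and $\Cov(X_i,X_j)=\mu_{ij}-p^2$ for $j\in\{k,l\}$, obtaining
\[
\phi_i(\nu^{\XX}) = p - 3p^2 + \mu_{ik} + \mu_{il}.
\]

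The crucial step, and the only place where the $\Sigma$-countermonotonic structure enters, is to eliminate the pair of unknown second moments $\mu_{ik}+\mu_{il}$ using \eqref{eq:summ}. For any $\ff\in\FFF^{\Sigma}_3(p)$ with $p\in(1/3,1/2]$ the sum of all pairwise second-order moments is constant, $\mu_2^+=\mu_{ik}+\mu_{il}+\mu_{kl}=3p-1$. Rearranging gives $\mu_{ik}+\mu_{il}=3p-1-\mu_{kl}$, and substituting into the previous display yields
\[
\phi_i(\nu^{\XX}) = p - 3p^2 + (3p-1-\mu_{kl}) = 4p-1-3p^2-\mu_{kl},
\]
which is the claimed formula.

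The main (and essentially only) obstacle is justifying the invariance $\mu_2^+=3p-1$ across the whole polytope $\FFF^{\Sigma}_3(p)$; this is precisely the content of \eqref{eq:summ}. It follows from the fact that every pmf in the polytope is $\Sigma_{cx}$-smallest, so that $S$ is distributed as $s_{1,2}$ in \eqref{mincx}; hence $E[S^2]=9p-2$ is fixed, and since $E[S^2]=3p+2\mu_2^+$ for Bernoulli marginals, $\mu_2^+=(E[S^2]-3p)/2=3p-1$ is constant. Once this is granted the remainder is a single linear substitution with no case analysis, the formula depending symmetrically only on the one remaining second moment $\mu_{kl}$.
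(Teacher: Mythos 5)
Your proposal is correct and follows essentially the same route as the paper's proof: both start from $\phi_i(\nu^{\XX})=\Cov(X_i,S_N)$, expand the covariance over the three components, and eliminate $\mu_{ik}+\mu_{il}$ via the invariant $\mu_2^+=3p-1$ from \eqref{eq:summ}, with your $V(X_i)+\Cov(X_i,X_k)+\Cov(X_i,X_l)$ decomposition being just a rewriting of the paper's $\sum_{j}\mu_{ij}-3p^2$ computation. Your closing derivation of \eqref{eq:summ} from the distribution $s_{1,2}$ of the sum is a nice self-contained verification of a fact the paper simply cites from the literature, but it does not change the substance of the argument.
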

\begin{proof}
We have 
\begin{equation}
\begin{split}
\phi_i(\nu^{\XX})&=\Cov(X_i, S_N)=\sum_{j=1}^3\Cov(X_i,X_j)=\sum_{j=1}^3(E(X_iX_j)-E(X_i)E(X_j))\\
&=\sum_{j=1}^3\mu_{ij}-3p^2=\mu_{ii}+\mu_{2}^+-\mu_{kl}-3p^2=4p-1-3p^2-\mu_{kl}, 
\end{split}
\end{equation}
where $\mu_{ii}=E[X_i^2]=E[X_i]=p$,  $\mu_2^+$ is in \eqref{eq:summ} and $i,k,l$ are distinct indices. 
\end{proof}
The marginal contributions of a vector $\XX\in \FFF_3^{\Sigma}(p)$ can be expressed as a linear convex combination of the marginal contributions of the extremal points $\RRR_i=(R_{i1}, R_{i2}, R_{i3})$,  with pmf $\rr_i$ $i=6,7,8$ in Table \ref{tab:exp2} as proved in the following Proposition \ref{prop:convexmarg}.
\begin{proposition}\label{prop:convexmarg}
    Let $\XX\sim\ff=\sum_{j=6}^8\lambda_j\rr_j\in \FFF^{\Sigma}_3(p)$, then 
    \begin{equation}
\begin{split}
\phi_i(\nu^{\XX})&=\sum_{j=6}^8\lambda_j\phi_i(\nu^j), 
\end{split}
\end{equation}
\end{proposition}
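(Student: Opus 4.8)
The plan is to reduce the statement to the linearity axiom of the Shapley value (property 4 in the list above). The central observation is that, although the variance game $\nu^{\XX}$ is built from a variance---an apparently nonlinear functional---its dependence on the underlying pmf $\ff$ is in fact affine, and becomes genuinely linear once we exploit that $\sum_{j=6}^{8}\lambda_j=1$. Concretely, for any coalition $J\subseteq N$ we have $\nu^{\XX}(J)=V(S_J)=E_{\ff}[S_J^2]-\big(E_{\ff}[S_J]\big)^2$. The first term $E_{\ff}[S_J^2]=\sum_{\xx\in\design}\big(\sum_{j\in J}x_j\big)^2 f(\xx)$ is manifestly a linear functional of $\ff$, hence it distributes over the convex combination $\ff=\sum_{j=6}^{8}\lambda_j\rr_j$. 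The second term is the key point: because every pmf in $\FFF_3(p)$ shares the marginal means $E[X_i]=p$, we have $E_{\ff}[S_J]=|J|p$ for all $\ff\in\FFF_3(p)$, so $\big(E_{\ff}[S_J]\big)^2=(|J|p)^2$ is constant across the class.

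First I would combine these two facts. Writing $E_{\ff}[S_J^2]=\sum_{j=6}^{8}\lambda_j E_{\rr_j}[S_J^2]$ and $(|J|p)^2=\sum_{j=6}^{8}\lambda_j(|J|p)^2$ (using $\sum_j\lambda_j=1$), subtracting gives
\begin{equation*}
\nu^{\XX}(J)=\sum_{j=6}^{8}\lambda_j\big(E_{\rr_j}[S_J^2]-(|J|p)^2\big)=\sum_{j=6}^{8}\lambda_j\,\nu^j(J)
\end{equation*}
for every $J\subseteq N$, where the last equality uses $\nu^j(J)=V_{\rr_j}(S_J)=E_{\rr_j}[S_J^2]-(|J|p)^2$ since $\rr_j\in\FFF_3(p)$. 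Thus the game $\nu^{\XX}$ decomposes exactly as the linear combination $\sum_{j=6}^{8}\lambda_j\nu^j$ of the extremal games.

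Then I would invoke the linearity property of the Shapley value, extended from two summands to the finite sum $\sum_{j=6}^{8}\lambda_j\nu^j$ by induction, to conclude $\phi_i(\nu^{\XX})=\sum_{j=6}^{8}\lambda_j\phi_i(\nu^j)$, which is the claim. As an independent cross-check, the same identity follows directly from Proposition \ref{prop:margC}: since $\phi_i(\nu^{\XX})=4p-1-3p^2-\mu_{kl}$ is an affine function of the single moment $\mu_{kl}$, and $\mu_{kl}=\sum_{\xx\in\design}x_kx_l f(\xx)$ is linear in $\ff$ and hence equal to $\sum_{j=6}^{8}\lambda_j\mu_{kl}^{(j)}$, the affine relation together with $\sum_j\lambda_j=1$ distributes the constant $4p-1-3p^2$ across the coefficients and reproduces the convex combination.

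The only delicate point---and the step I would be most careful about---is the treatment of the squared-mean term $\big(E_{\ff}[S_J]\big)^2$: naively a square is not linear, so one might expect the game decomposition to fail. The resolution is entirely due to the Fr\'echet constraint, namely that the means are pinned at $p$ throughout $\FFF_3(p)$; this term is therefore constant and is correctly reabsorbed precisely because the mixing weights sum to one. Everything else is a routine application of the linearity of expectation and of the Shapley axioms.
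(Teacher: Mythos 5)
Your proposal is correct, and its main route is genuinely different from the paper's. You prove the stronger, coalition-wise identity $\nu^{\XX}(J)=\sum_{j=6}^{8}\lambda_j\nu^{j}(J)$ for every $J\subseteq N$, by splitting $V(S_J)=E_{\ff}[S_J^2]-\bigl(E_{\ff}[S_J]\bigr)^2$, noting that the second-moment term is linear in $\ff$ while the squared mean $(|J|p)^2$ is constant across the Fr\'echet class (the delicate point you correctly isolate, reabsorbed via $\sum_{j}\lambda_j=1$; this is legitimate since the extremal points $\rr_6,\rr_7,\rr_8$ are themselves pmfs in $\FFF_3(p)$), and you then conclude by the linearity axiom of the Shapley value. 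The paper instead does exactly what you relegate to a cross-check: it substitutes the linearity of the second moments, $\mu_{kl}=\sum_{j=6}^{8}\lambda_j\mu^{j}_{kl}$, into the closed form $\phi_i(\nu^{\XX})=4p-1-3p^2-\mu_{kl}$ of Proposition \ref{prop:margC} and uses $\sum_j\lambda_j=1$ to distribute the constant term. Comparing the two: your game-decomposition argument is more general, since it does not rely on Proposition \ref{prop:margC} (which depends on $\mu_2^+=3p-1$ holding on $\FFF^{\Sigma}_3(p)$), so it proves the analogous convexity statement for any convex combination in the full class $\FFF_3(p)$ and for any linear solution concept, not only the Shapley value; the paper's argument is shorter given that Proposition \ref{prop:margC} is already in hand, avoiding the coalition-by-coalition verification but remaining tied to the $\Sigma$-countermonotonic polytope.
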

\begin{proof}
If $\ff\in \FFF^{\Sigma}_d(p)$, then $\mu_{jk}=\sum_{j=6}^8\lambda_j\mu^j_{kl}$, as proved in \cite{fontana2018representation}. Therefore, since $\sum_{j=1}^d\lambda_j=1$, we have
    \begin{equation}
\begin{split}
\phi_i(\nu^{\XX})&=4p-1-3p^2-\mu_{kl}=\sum_{j=6}^8{\lambda_j}(4p-1-3p^2-\mu^j_{kl}). 
\end{split}
\end{equation}
\end{proof}

We now compute the marginal contributions to $\nu(N)$ of the components of the extremal points $\RRR_6$, $\RRR_7$ and $\RRR_8.$ 
The marginal contributions corresponding to $\rr_6$ are 
\begin{equation}
    \begin{split}
     \phi_{j}(\nu^6)&=4p-3p^2-1, j=1,2,\\
        \phi_{3}(\nu^6)&=p-3p^2.
    \end{split}
\end{equation}
The contributions $\nu^6_j$, $j=1,2$ are the minimal marginal contributions, since $\mu_{13}=\mu_{23}=0$ are the minimal admissible second order moments of the class, while 
$\nu^6_3$ corresponds to the maximal contribution to the variance of the sum, since $\mu_{12}=3p-1$ is the maximal admissible value for the second order moment in $\FFF_3(p)$. The marginal contributions for $\RRR_7$ and $\RRR_8$ can be obtained simply by permutation of the indices.
The following example illustrates Proposition \ref{prop:convexmarg}, by considering the exchangeable pmf in $\FFF_3(p).$
\begin{example}
    Let us consider the exchangeable pmf $\ff^e\in \FFF^{\Sigma}(p).$ The marginal contributions of the components of the exchangeable vector $\XX^e\sim\ff^e$ are equal, $\phi_i(\nu^e)=3p-3p^2-\frac{2}{3}$. From \eqref{eq:exc} we have
\begin{equation}
    \phi_i(\nu^e)=\frac{1}{3}\phi_i(\nu^6)+\frac{1}{3}\phi_i(\nu^7)+\frac{1}{3}\phi_i(\nu^6),
\end{equation}
and this is easy to check.
\end{example}

We conclude this section by observing that the games associated to different $\Sigma$-countermonotonic vectors have different structures, although they all assume the same value on the grand coalition.
Indeed, the following Proposition \ref{prp:gameprop} is a direct consequence of Proposition 3.2 in \cite{colini2018variance}.
\begin{proposition}\label{prp:gameprop}
The games $\nu^6, \nu^7$ and $\nu^8$ are neither supermodular (i.e. $\nu(I\cup J)+\nu(I\cap J)\geq \nu(I)+\nu(J)$, for all $I,J\subset N$ ) nor submodular (i.e. $\nu(I\cup J)+\nu(I\cap J)\leq \nu(I)+\nu(J)$, for all $I,J\subset N$ ). The game $\nu^e$ is submodular.
\end{proposition}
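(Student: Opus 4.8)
The plan is to convert the modularity question into a sign check on the three pairwise covariances of the components, using the structural description of variance games. The key ingredient is Proposition 3.2 of \cite{colini2018variance}: for the variance game $\nu(J)=V(\sum_{j\in J}X_j)$ on $N=\{1,2,3\}$, the mixed second difference $\nu(J\cup\{i,k\})-\nu(J\cup\{i\})-\nu(J\cup\{k\})+\nu(J)$ (for $i,k\notin J$) equals $2\Cov(X_i,X_k)$, and is in particular independent of the coalition $J$. Consequently $\nu$ is supermodular exactly when all three pairwise covariances are nonnegative, and submodular exactly when all three are nonpositive. First I would restate this equivalence in the three-player setting, so that proving the proposition reduces to reading off the signs of $\Cov(X_1,X_2)$, $\Cov(X_1,X_3)$ and $\Cov(X_2,X_3)$ for each of $\rr_6,\rr_7,\rr_8$ and for $\ff^e$.

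Next I would insert the correlations already obtained in Section \ref{ExtrCorr}. For an extremal pmf supported on $\design_1\cup\design_2$ the pairwise correlations are $\rho_{i_1i_2}=\frac{3p-1-p^2}{p(1-p)}$ and $\rho_{i_1i_3}=\rho_{i_2i_3}=-\frac{1}{1-p}$, for a suitable labelling of the indices; $\rr_6$, $\rr_7$ and $\rr_8$ correspond to the three choices of which pair $\{i_1,i_2\}$ carries the correlation $\rho_{i_1i_2}$. Since $\Cov(X_i,X_j)$ shares the sign of $\rho_{ij}$ (the factor $p(1-p)$ being positive), each of these pmfs has exactly one strictly positive pairwise covariance, coming from $\rho_{i_1i_2}>0$, and two strictly negative ones, coming from $-\tfrac{1}{1-p}<0$. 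Thus the three covariances are neither all nonnegative nor all nonpositive, and the characterization of the first step immediately yields that $\nu^6$, $\nu^7$ and $\nu^8$ are neither supermodular nor submodular.

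For $\nu^e$ I would exploit exchangeability: since $\XX^e$ is exchangeable its three pairwise covariances coincide, so it suffices to examine one. I would obtain it either from the equicorrelation $\rho$ of $\ff^e$ recorded in Section \ref{ExtrCorr}, or, more transparently, by averaging. From $\ff^e=\tfrac13(\rr_6+\rr_7+\rr_8)$ in \eqref{eq:exc} and the linearity of the second-order moments in the pmf (the relation $\mu_{kl}=\sum_j\lambda_j\mu^j_{kl}$ recalled from \cite{fontana2018representation}) one gets $\mu_{kl}(\ff^e)=\tfrac13(3p-1)$ for every pair, whence $\Cov(X_k,X_l)=\tfrac{3p-1}{3}-p^2=\tfrac{3p-1-3p^2}{3}$. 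A short sign analysis shows this is strictly negative on all of $(1/3,1/2]$, since the quadratic $3p^2-3p+1$ has negative discriminant. All three covariances being nonpositive, the characterization gives that $\nu^e$ is submodular.

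The main obstacle is conceptual rather than computational: it is the clean invocation (or rederivation) of the second-difference identity $\nu(J\cup\{i,k\})-\nu(J\cup\{i\})-\nu(J\cup\{k\})+\nu(J)=2\Cov(X_i,X_k)$ behind Proposition 3.2 of \cite{colini2018variance}, together with the observation that its independence of $J$ is exactly what collapses super/submodularity to a sign condition on the pairwise covariances. Once this is secured the remainder is a one-line sign check. The secondary delicate point is the sign of $\rho_{i_1i_2}=\frac{3p-1-p^2}{p(1-p)}$ on $(1/3,1/2]$: it is precisely the positivity of this single correlation, established in Section \ref{ExtrCorr}, that prevents $\nu^6,\nu^7,\nu^8$ from being submodular, so the conclusion that they are neither super- nor submodular rests on this sign being correctly accounted for.
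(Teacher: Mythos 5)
Your reduction is exactly the engine the paper has in mind: Proposition \ref{prp:gameprop} is stated as a direct consequence of Proposition 3.2 in \cite{colini2018variance}, and your rederivation of the second-difference identity $\nu(J\cup\{i,k\})-\nu(J\cup\{i\})-\nu(J\cup\{k\})+\nu(J)=2\Cov(X_i,X_k)$, whose independence of $J$ collapses super/submodularity to a sign condition on the three pairwise covariances, is the right (and, in the paper, entirely implicit) argument. Your treatment of $\nu^e$ is correct and self-contained: $\Cov(X_k,X_l)=\tfrac{3p-1-3p^2}{3}<0$ on all of $(1/3,1/2]$ because $3p^2-3p+1$ has negative discriminant, so $\nu^e$ is submodular. (Incidentally, your equicorrelation is the correct one; the formula $\rho=\frac{-3p^2+6p-1}{3p(1-p)}$ displayed in Section \ref{ExtrCorr} cannot be right, since at $p=1/2$ it would exceed $1$.)

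The gap is precisely the step you flag as delicate and then wave through by citation: the sign of $\rho_{i_1i_2}=\frac{3p-1-p^2}{p(1-p)}$. The numerator $3p-1-p^2$ is positive if and only if $p>\tfrac{3-\sqrt{5}}{2}\approx 0.382$, so on the subinterval $\bigl(1/3,\tfrac{3-\sqrt{5}}{2}\bigr)$ it is strictly negative; for instance $p=0.35$ gives $3p-1-p^2=-0.0725$. There, all three pairwise covariances of $\rr_6$ (and likewise of $\rr_7$, $\rr_8$) are nonpositive, and by your own characterization the games $\nu^6,\nu^7,\nu^8$ are then \emph{submodular}, not ``neither'' --- one can verify directly at $p=0.35$ that every submodularity inequality holds for $\nu^6$. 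So your argument establishes the first assertion only for $p>\tfrac{3-\sqrt{5}}{2}$, and on $\bigl(1/3,\tfrac{3-\sqrt{5}}{2}\bigr]$ the conclusion you set out to prove is in fact false. You inherited the claim $\rho_{1,2}>0$ from Section \ref{ExtrCorr}, where it is asserted without proof and is wrong on the same subinterval; but in a blind proof this one-line sign check was exactly the point that required verification rather than citation. A correct write-up should either restrict the statement to $p\in\bigl(\tfrac{3-\sqrt{5}}{2},1/2\bigr]$ or record that the modularity status of the three extremal games changes at $p=\tfrac{3-\sqrt{5}}{2}$.
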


\begin{remark}
    As proved in  \cite{colini2018variance}, the variance game enjoys the Shapley fusion property, that is:
    $\phi_J(\nu^J)=\sum_{j\in J}\phi_j(\nu)$.
\end{remark}

%\textcolor{magenta}{Proposition 5.5 e 3.6 sono generalizzabili a dimensioni qualunque, farlo?}

\section{Conclusion and further research}\label{sec:conc}

We analytically provide the geometrical representation of three dimensional Bernoulli variables. This  opens the way to a complete analysis of their statistical properties. In this paper, we considered the dependence properties of the extremal points, focusing on negative dependence and measuring the impact of different dependence structures on aggregate risk, measured using the variance of the sum.

The importance of a full characterization in dimension three also relies in providing a case where examples and counterexamples can be made to help in the proof or refutation of more general conjectures.

The full representation of Bernoulli variables in dimensions higher than three is not trivial, since the extremal points depends on $p$ in a way much more complicated with respect to the $d=3$ case. For example, in dimension four, using 4ti2, we computed the number of extremal pmfs for different values of $p$, $p=s/100, s=1,\ldots,50$. We find the results in Figure \ref{fig:nr4}.

\begin{figure}
    \centering
    \includegraphics[width=1\linewidth]{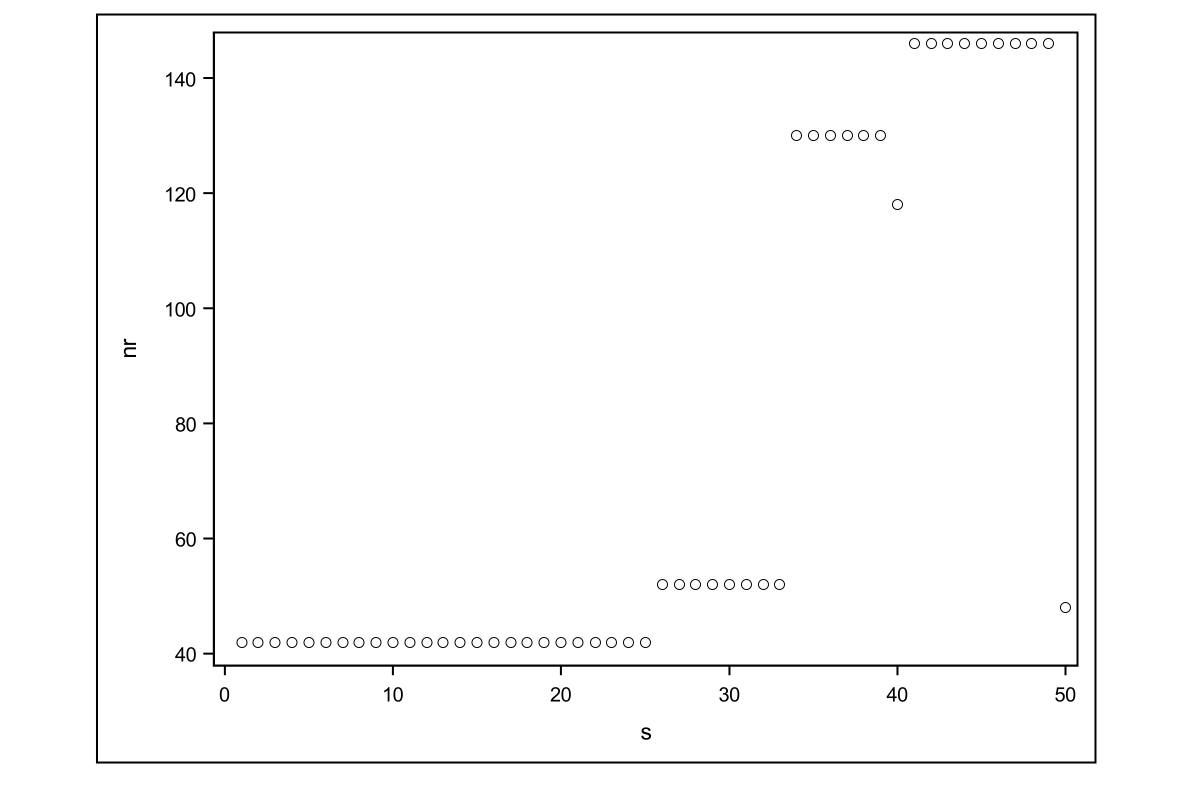}
    \caption{Number of extremal pmfs \emph{vs} $p$, $nr$ is the number of rays and $s=100p$.}
    \label{fig:nr4}
\end{figure}

\section{Acknowledgments}
A previous version of this work has been presented at the IES 2025 Conference, University of Padova, Bressanone-Brixen, Italy, June 25-27, 2025, \cite{IES2025}.

%\section{Proposed method}\label{sec.propmethod}

%\bibliographystyle{plainnat}
\bibliographystyle{apalike}
\bibliography{biblio.bib}

%\listofchanges

\end{document}